\documentclass[12pt,leqno]{article}
\usepackage[centertags]{amsmath}
\usepackage{amsfonts}
\usepackage{amssymb}
\usepackage{amsthm}
\renewcommand\baselinestretch{1.5}
\theoremstyle{plain}
\newtheorem{thm}{Theorem}[section]
\newtheorem{cor}[thm]{Corollary}
\newtheorem{lem}[thm]{Lemma}

\newtheorem{defn}[thm]{Definition}
\newtheorem{rem}[thm]{Remark}
\newtheorem{Exa}[thm]{Example}
\numberwithin{equation}{section}
 \textwidth 16cm\textheight22cm\hoffset=-1.5cm\voffset=-1cm
\renewcommand\baselinestretch{1.5}
\begin{document}
\vspace{4cm} \vspace{4cm} \centerline{\bf On graded $S-$comultiplication modules}\vspace{2cm}
 \centerline {\baselineskip.8cm
 \centerline {  {Mohammad Hamoda$^1$ and Khaldoun Al-Zoubi$^2$  } }}
\centerline {\baselineskip.8cm {$^1$Department of Mathematics, Faculty of Applied Science, Al-Aqsa University, Gaza, Palestine.}}
\centerline {\baselineskip.8cm{P.O. Box 4051, Gaza, Palestine }}
\centerline {\baselineskip.8cm{e-mail: ma.hmodeh@alaqsa.edu.ps }}
\baselineskip.8cm {$^2$Department of Mathematics and Statistics, Faculty of Science and Arts, Jordan University of Science and Technology, Irbid, Jordan.}\\
\centerline {\baselineskip.8cm{P.O. Box 3030, Irbid-Jordan }}
\centerline {\baselineskip.8cm{e-mail: kfzoubi@just.edu.jo }}

\thispagestyle{empty}
\renewcommand\baselinestretch{1.5}
\vspace{2cm}
\begin{abstract}
In this paper, we introduce the concept of graded $S-$comultiplication modules.  Several results concerning graded $S-$comultiplication modules are proved.  We show that $N$ is a graded $S-$second submodule of a graded $S-$comultiplication $R-$module $M$ if and only if $Ann_R(N)$ is a graded $S-$prime ideal of $R$ and there exists $x\in~S$ such that $xN\subseteq\overline{x}N$ for every $\overline{x}\in~S$.
\end{abstract}
\vspace{1cm} \noindent {\bf "2010 Mathematics Subject Classification"}: 13A15, 13G05, 16W50.
\\ \noindent{\bf Keywords:}
Graded comultiplication module, Graded multiplication module, Graded second module.\\
\newpage
\section{Introduction}
Commutative algebra evolved from problems arising in number theory and algebraic geometry.  Much of the modern development of the commutative algebra emphasizes graded rings.  Once the grading is considered to be trivial, the graded theory reduces to the usual module theory.  So from this perspective, the theory of graded modules can be considered as an extension of module theory.  Graded rings play a central role in algebraic geometry and commutative algebra.  Gradings appear in many circumstances, both in elementary and advanced level.  Recently, extensive researches have been done on rings with group-graded structure, see for example \cite{a,f,p,d,c,b}.  The notion of graded multiplication modules was studied by many authors, see for example \cite{o,h,i,e}.  The notion of graded comultiplication modules which are the dual nation of graded multiplication modules was introduced and studied by Ansari-Toroghy and Farshadifar in \cite{a}.  Later, Al-Zoubi and Al-Qderat \cite{j} studied on this issue.  The objective of this paper is to construct more accurate results and concepts regarding generalizations of graded comultiplication modules.  In fact the motivation of writing this paper is two folded:\\$(i)$ To extend the concept of graded comultiplication modules to the concept of graded $S-$comultiplication modules.\\
$(ii)$ To determine when a graded module is graded $S-$comultiplication modules.  The remains of this paper is organized as follows:\\
Section $2$ concerns some basic definitions and results in the sequel of this paper.  In section $3$, the main results concerning graded $S-$comultiplication modules will be given.  Section $4$ concerns the conclusion.
\section{Preliminary Notes}
In this section we state some basic concepts and results related to graded ring theory.  We hope that this will improve the readability and understanding of this paper.
\begin{defn}\cite{k}
Let $G$ be a group with identity $e$ and $R$ be a commutative ring  with identity $1_R$.  Then, $R$ is said to be a $G-$graded ring if there exist additive subgroups $R_g$ of $R$ indexed by elements $g\in~G$ such that $R=\bigoplus_{g\in~G}R_g$ and $R_gR_h\subseteq~R_{gh}$ for all $g,~h\in~G$.  If $R_gR_h=R_{gh}$, the ring is called strongly graded ring.
\end{defn}
Consider $supp(R)=\{g\in~G:R_g\neq~0\}$.  An element $x$ of $R$ has a unique decomposition as $x=\sum_{g\in~G}x_g$ for all $g\in~G$.  Also, we write $h(R)=\bigcup_{g\in~G}R_g$.  Moreover $R_e$ is a subring of $R$ and $1_R\in~R_e$.  If an element of $R$ belongs to $h(R)$, then it is called homogeneous and any element $x_g\in~R_g$ is said to have degree $g$.
\begin{defn}\cite{k}
Let $R=\bigoplus_{g\in~G}R_g$ be a $G-$graded ring.  An ideal $I$ of $R$ is said to be a graded ideal of $R$ if $I=\bigoplus_{g\in~G}(I\bigcap~R_g)$.
\end{defn}
Clearly, $\bigoplus_{g\in~G}(I\bigcap~R_g)\subseteq~I$ and hence $I$ is a graded ideal of $R$ if $I\subseteq\bigoplus_{g\in~G}(I\bigcap~R_g)$.  Moreover $R/I$  becomes a $G-$graded ring with $g-$component $(R/I)_g=(R_g+I)/I$ for $g\in~G$.
\begin{defn}\cite{k}
Let $R$ be a $G-$graded ring and $M$ be an $R-$module.  We say that $M$ is a graded $R-$module if there exists a family of subgroups $\{M_g\}_{g\in~G}$ of $M$ such that $M=\bigoplus_{g\in~G}M_g$ (as abelian groups) and $R_gM_h\subseteq~M_{gh}$ for all $g,~h\in~G$.  If $R_gM_h=M_{gh}$, the $R-$mould $M$ is called strongly graded $R-$module.
\end{defn}
Consider $supp(M)=\{g\in~G:M_g\neq~0\}$.  Here $R_gM_h$ denotes the additive subgroups of $M$ consisting of all finite sums of elements $r_gs_h$ with $r_g\in~R_g$ and $s_h\in~M_h$.  Also, we write $h(M)=\bigcup_{g\in~G}M_g$.  If an element of $M$ belongs to $h(M)$, then it is called homogeneous and any element $x_g\in~M_g$ is said to have degree $g$.  It is clear that $M_g$ is an $R_e-$submodule of $M$ for all $g\in~G$.
\begin{defn}\cite{k}
Let $M=\bigoplus_{g\in~G}M_g$ be a $G-$graded $R-$module and $N$ be a submodule of $M$.  Then, $N$ is said to be a graded submodule of $M$ if $N=\bigoplus_{g\in~G}N_g$ where $N_g=N\bigcap~M_g$ for $g\in~G$.  In this case, $N_g$ is called the $g-$component of $N$ for $g\in~G$.  Moreover, $M/N$ becomes a $G-$graded module with $g-$component $(M/N)_g=(M_g+N)/N$ for $g\in~G$.
\end{defn}
\begin{defn}\cite{l}
Let $I$ be a graded ideal of a $G-$graded ring $R$.  Then, $I$ is said to be a graded prime ideal if $I\neq~R$; and whenever $ab\in~I$, we have $a\in~I$ or $b\in~I$, where $a,b\in~h(R)$.
\end{defn}
\begin{defn}\cite{m}
Let $R$ be a $G-$graded ring, $M$ be a graded $R-$module.  A graded submodule $N$ of $M$ is said to be a graded prime submodule of $M$ if $N\neq~M$; and whenever $r\in~h(R)$ and $m\in~h(M)$ with $rm\in~N$, then either $m\in~N$ or $r\in(N:_RM)$.
\end{defn}
\begin{defn}\cite{k}
Let $R$ be a $G-$graded ring, A nonzero graded $R-$module $M$ is said to be a graded prime module if $Ann_R(M)=Ann_R(N)$ for every nonzero graded submodule $N$ of $M$.
\end{defn}
\begin{defn}\cite{k}
Let $R$ be a $G-$graded ring.  A nonempty $S\subseteq~h(R)$ is said to be a multiplicatively closed subset of $R$ if $(i)~0\not\in~S$, $(ii)~1\in~S$, $(iii)~ab\in~S$ for all $a,b\in~S$.
\end{defn}
\begin{defn} \cite{x}
let $R$ be a $G-$graded ring, $S\subseteq~h(R)$ be a multiplicatively closed subset of $R$ and $M$ be a graded $R-$module.  A graded submodule $N$ of $M$ with $(N:_RM)\bigcap~S=\phi$ is said to be a graded $S-$prime submodule of $M$ if there exists a fixed $x\in~S$ such that whenever $rm\in~N$ for some $r\in~h(R)$ and $m\in~h(M)$, then either $xr\in(N:_RM)$ or $xm\in~N$.  In particular, a graded ideal $P$ of $R$ is said to be a graded $S-$prime if $P$ is a graded $S-$prime submodule of $M$.
\end{defn}
\begin{defn}\cite{n}
Let $R$ be a $G-$graded ring, $M$ be a graded $R-$module.  A non zero graded submodule $N$ of $M$ is said to be a graded second submodule of $M$ if $rN=0$ or $rN=N$ for every $r\in~h(R)$.
\end{defn}
\begin{defn}\cite{k}
Let $R$ be a $G-$graded ring, A graded $R-$module $M$ is said to be graded finitely generated if $M=R_{m_1}+R_{m_2}+...+R_{m_n}$ for some $m_1,m_2,...,m_n~\in~h(M)$.  $M$ is called a graded cyclic if it can be generated by a single element i.e., there exists $x\in~h(M)$ such that $M=Rx$.
\end{defn}
\begin{defn}\cite{k}
Let $R$ be a $G-$graded ring, $M,~\overline{M}$ be graded $R-$modules.  Then an $R-$homomorphism $f:M\longrightarrow\overline{M}$ is said to be a graded $R-$homomorphism if for all $m,~n\in~M$;\\
$(i)$ $f(m+n)=f(m)+f(n)$;\\
$(ii)$ $f(rm)=rf(m)$ for any $r\in~R$ and $m\in~M$;\\
$(iii)$ $f(M_g)\subseteq~\overline{M_g}$ for all $g\in~G$.
\end{defn}
\begin{defn}\cite{w}
Let $R$ be a $G-$graded ring, $M$ be a graded $R-$module and  $N$ be a graded submodule of $M$.  $M$ is said to be a graded torsion $-$ free $R-$module if whenever $r\in~h(R)$ and $m\in~M$ with $rm=0$, then either $m=0$ or $r=0$.
\end{defn}
Equivalently, $M$ is said to be a graded torsion $-$ free $R-$module if the set $T(M)=\{m\in~M:rm=~0~for~some~0\neq~r\in~h(R)\}$ is zero.  $M$ is called a graded torsion $R-$module if $T(M)=M$.
\begin{rem}\cite{s}
Let $R$ be a $G-$graded ring, $M$ be a graded $R-$module, $P$ be a graded ideal of $R$ and  $N$ be a graded submodule of $M$.  Then:\\
$(i)$ $Ann_R(M)=(0:_RM)=\{r\in~R:rM=0\}$ is a graded ideal of $R$.\\
$(ii)$ $(0:_MP)=\{m\in~M:mP=0\}$ is a graded submodule of $M$.\\
$(iii)$ $Ann_R(N)=(0:_RN)=\{r\in~R:rN=0\}$ is a graded ideal of $R$.
\end{rem}
\begin{defn}\cite{a}
Let $R$ be a $G-$graded ring.  A graded $R-$module $M$ is said to be a graded comultiplication module if for every graded submodule $N$ of $M$, there exists a graded ideal $I$ of $R$ such that $N=(0:_MI)$.
\end{defn}
\section{Results and Discussion}
We start by the following definition.
\begin{defn}
Let $R$ be a $G-$graded ring, $M$ be a graded $R-$module and $S\subseteq~h(R)$ be a multiplicatively closed subset of $R$.  $M$ is is said to be a graded $S-$comultiplication module if for each graded submodule $N$ of $M$, there exist $x\in~S$ and a graded ideal $P$ of $R$ such that $x(0:_MP)\subseteq~N\subseteq(0:_MP)$.
\end{defn}
We define the graded ring $R$ to be a graded $S-$comultiplication ring if it is a graded $S-$comultiplication module over itself.
\begin{rem}
\begin{enumerate}
\item[(i)] Every graded $R-$module $M$ with $Ann_R(M)\cap~S\neq\phi$ is a graded $S-$comultiplication module.
\item[(ii)] Every graded comultiplication  module is also graded $S-$comultiplication module.\\
The converse is true only, when $S\subseteq~U(R)$, where $U(R)$ denotes the set of all units in $R$.
\end{enumerate}
\end{rem}
In the following example, we give a graded $S-$comultiplication module that is not graded comultiplication.
\begin{Exa}
Consider $R=\mathbb{Z}$, $G=\mathbb{Z}_2$.  Define $R_0=\mathbb{Z}$ and $R_1=\{0\}$.  Then, $R$ is a $G-$graded ring.  Let $M=\mathbb{Z}=\oplus_{g\in\mathbb{Z}_2}M_g$ be a graded $\mathbb{Z}-$module where $M_0=\mathbb{Z}$ and $M_1=\{0\}$.  Let $S=\mathbb{Z}-\{0\}\subseteq~h(\mathbb{Z})$ be a multiplicatively closed subset of $R$ and consider the graded submodule $N=m\mathbb{Z}$ of $M$ where $m\neq~0,~\pm~1$.  Thus, $(0:_RAnn_R(m\mathbb{Z}))=\mathbb{Z}\neq~mZ$.  Thus, $M$ is not graded comultiplication module.  Now, we show that $M$ is a graded $S-$comultiplication module, for let $L$ be any graded submodule of $M$.  Then, $L=r\mathbb{Z}$ for some $r\in\mathbb{Z}$.  if $r=0$, then one can choose $x=1$ so that $x(0:_RAnn_R(L))=(0)=r\mathbb{Z}$.  If $r\neq0$, then one can choose $x=r$ so that $x(0:_RAnn_R(L))\subseteq~r\mathbb{Z}=L\subseteq(0:_RAnn_R(L))$.  Thus, $M$ is a graded $S-$comultiplication module
\end{Exa}
We need the following Lemma.
\begin{lem}\cite{q}
Let $R$ be a $G-$graded ring, $M$ be a graded $R-$module.  Then the following assertions hold.
\begin{enumerate}
\item[(i)] If $I$ and $J$ are graded ideals of $R$, then $I+J$ and $I\cap~J$ are graded ideals of $R$.\\
\item[(ii)] If $N$ is a graded submodule of $M$, $r\in~h(R)$, $x\in~h(M)$ and $I$ is a graded ideal of $R$, then $Rx$, $IN$ and $rN$ are graded submodules of $M$.
\item[(iii)] If $N$ and $K$ are graded submodules of $M$, then $N+K$ and $N\cap~K$ are also graded submodules of $M$ and $(N:_RM)=\{r\in~R:rM\subseteq~N\}$ is a graded ideal of $R$.
\end{enumerate}
 \end{lem}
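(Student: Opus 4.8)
The plan is to reduce every part to the single structural fact that an additive subgroup $L$ of a graded object (the ring $R$ or the module $M$) is graded precisely when every homogeneous component of each of its elements again lies in $L$. This criterion is immediate from Definition 2.2 (respectively Definition 2.4) together with the directness of the decomposition $R=\bigoplus_{g\in G}R_g$ (respectively $M=\bigoplus_{g\in G}M_g$): if $a\in L$ has decomposition $a=\sum_{g}a_g$ with $a_g\in R_g$, then $a\in\bigoplus_{g}(L\cap R_g)$ forces each $a_g\in L\cap R_g$, and conversely closure under taking homogeneous components yields $L\subseteq\bigoplus_{g}(L\cap R_g)$, which by the remark following Definition 2.2 already suffices for gradedness.

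With this criterion in hand, the sum cases $I+J$ in (i) and $N+K$ in (iii) are handled by writing a general element as $b+c$, expanding $b$ and $c$ into their homogeneous parts, and regrouping by degree: the degree-$g$ part is $b_g+c_g$, which lies in $(I+J)\cap R_g$. The intersection cases $I\cap J$ and $N\cap K$ are even more direct: if $a\in I\cap J$ then, since both $I$ and $J$ are graded, every component $a_g$ lies in $I$ and in $J$, hence in $I\cap J$, and likewise for submodules.

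For the product cases in (ii) I would use that products of homogeneous elements are homogeneous with degrees multiplied in $G$. Writing $x\in M_h$ and an arbitrary $r=\sum_{g}r_g$, the element $rx=\sum_{g}r_gx$ has its degree-$gh$ component equal to $r_gx\in Rx$, where the reindexing $g\mapsto gh$ is a bijection because $G$ is a group; hence $Rx$ is closed under components. The cases $rN$ and $IN$ follow the same pattern applied to $r_gm_h\in R_gN_h\subseteq M_{gh}$, the only extra remark being that $IN$ is additively generated by such homogeneous products $a_gm_h$ with $a_g\in I\cap R_g$ and $m_h\in N\cap M_h$.

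The subtlest point, and the one I expect to be the main obstacle, is showing that the colon $(N:_RM)$ in (iii) is graded, since its definition quantifies over all of $M$ rather than over a fixed homogeneous element. Given $r\in(N:_RM)$ with $r=\sum_{g}r_g$, I would verify $r_g\in(N:_RM)$ by testing on homogeneous $m\in M_h$: from $rm\in N$ and the gradedness of $N$, every homogeneous component $r_gm\in M_{gh}$ of $rm$ must lie in $N$; since $M$ is the sum of its homogeneous pieces this gives $r_gM\subseteq N$, that is $r_g\in(N:_RM)$. The care needed here is to perform the component extraction inside $N$ rather than inside $M$, which is exactly the step that invokes the gradedness of $N$.
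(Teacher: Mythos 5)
Your proof is correct. Note, however, that the paper offers no proof of this lemma at all: it is imported verbatim from the cited reference (Farzalipour and Ghiasvand), so there is no in-paper argument to compare against. Your reduction of every clause to the criterion that a subgroup is graded exactly when it is closed under taking homogeneous components is the standard route, and you correctly flag the two points where care is needed, namely that the reindexing $g\mapsto gh$ is a bijection because $G$ is a group (so the terms $r_gx$ really are the homogeneous components of $rx$), and that for $(N:_RM)$ the component extraction of $r_gm$ must take place inside the graded submodule $N$ and then be assembled over all homogeneous $m$ to conclude $r_gM\subseteq N$.
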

\begin{thm}\label{1}
Let $R$ be a $G-$graded ring, $S\subseteq~h(R)$ be a multiplicatively closed subset of $R$ and $M$ be a graded $R-$module.  Then the following assertions are equivalent.
\begin{enumerate}
\item[(i)] $M$ is a graded $S-$comultiplication module.\\
\item[(ii)] For every graded submodule $N$ of $M$, there exists $x\in~S$ such that $x(0:_MAnn_R(N))\subseteq~N\subseteq(0:_MAnn_R(N))$.\\
\item[(iii)] For every graded submodules $L,~N$ of $M$ with $Ann_R(L)\subseteq~Ann_R(N)$, there exists $x\in~S$ such that $xN\subseteq~N$.
\end{enumerate}
\end{thm}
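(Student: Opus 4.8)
The plan is to establish the cycle $(i)\Rightarrow(ii)\Rightarrow(iii)\Rightarrow(i)$, leaning throughout on the order-reversing behaviour of the two annihilator operations. The facts I will use repeatedly are elementary: for graded ideals $I\subseteq J$ one has $(0:_MJ)\subseteq(0:_MI)$; for any graded submodule $N$ one always has $N\subseteq(0:_MAnn_R(N))$ as well as $Ann_R(N)\subseteq Ann_R\bigl((0:_MAnn_R(N))\bigr)$; and $N\subseteq(0:_MP)$ is equivalent to $P\subseteq Ann_R(N)$. Each of these is immediate from the definitions, and the fact that all the objects involved ($(0:_MP)$, $Ann_R(N)$, $xN$, etc.) are again graded is guaranteed by the preceding Lemma and by the Remark on annihilators, so I will not belabour gradedness.

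For $(i)\Rightarrow(ii)$, I would fix a graded submodule $N$ and take $x\in S$ together with a graded ideal $P$ satisfying $x(0:_MP)\subseteq N\subseteq(0:_MP)$, as (i) supplies. The right inclusion forces $PN=0$, hence $P\subseteq Ann_R(N)$, and so $(0:_MAnn_R(N))\subseteq(0:_MP)$. Multiplying by $x$ yields $x(0:_MAnn_R(N))\subseteq x(0:_MP)\subseteq N$, while $N\subseteq(0:_MAnn_R(N))$ is automatic; this is exactly (ii), and with the same $x$. The reverse $(ii)\Rightarrow(i)$ will be immediate by taking $P=Ann_R(N)$.

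For $(ii)\Rightarrow(iii)$, given graded submodules $L,N$ with $Ann_R(L)\subseteq Ann_R(N)$, I would apply (ii) to $L$ to obtain $x\in S$ with $x(0:_MAnn_R(L))\subseteq L$. The hypothesis on annihilators reverses to $(0:_MAnn_R(N))\subseteq(0:_MAnn_R(L))$, whence $N\subseteq(0:_MAnn_R(N))\subseteq(0:_MAnn_R(L))$, and therefore $xN\subseteq x(0:_MAnn_R(L))\subseteq L$, which is the desired conclusion of (iii).

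The step I expect to require the most care is $(iii)\Rightarrow(ii)$, because one must feed the right pair into (iii). Given a graded submodule $N$, I would set $K=(0:_MAnn_R(N))$ and apply (iii) to the pair $L:=N$ and $K$ playing the role of the ``$N$'' in (iii). Its applicability hinges on verifying $Ann_R(N)\subseteq Ann_R(K)$, which holds since every $r\in Ann_R(N)$ annihilates each $m\in K$ by the very definition of $K$. Then (iii) produces $x\in S$ with $xK\subseteq N$, i.e. $x(0:_MAnn_R(N))\subseteq N$, and combined with the free inclusion $N\subseteq(0:_MAnn_R(N))$ this is precisely (ii), closing the cycle. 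Here I read the conclusion of (iii) as $xN\subseteq L$, the natural $S$-analogue of the comultiplication characterization $Ann_R(L)\subseteq Ann_R(N)\Rightarrow N\subseteq L$; with the literal reading $xN\subseteq N$ the statement would be vacuous, so this is surely the intended form.
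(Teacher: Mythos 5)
Your proof is correct and follows essentially the same route as the paper's ($(i)\Leftrightarrow(ii)$ via $P=Ann_R(N)$, then $(ii)\Leftrightarrow(iii)$ via the order-reversing annihilator inclusions), and you are right that the conclusion of (iii) must be read as $xN\subseteq L$ — the printed $xN\subseteq N$ is a typo, as the paper's own proof of $(ii)\Rightarrow(iii)$ ends by landing inside $L$. The only (harmless) difference is that in $(ii)\Rightarrow(iii)$ you invoke (ii) only for $L$ and use the free inclusion $N\subseteq(0:_MAnn_R(N))$, whereas the paper applies (ii) to both $L$ and $N$ and takes the product $x_1x_2$; your version is slightly more economical.
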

\begin{proof}
$(i)\Longrightarrow(ii)$ Assume that $M$ is a graded $S-$comultiplication module and $N$ be a graded submodule of $M$.  Then, there exist $x\in~S$ and a graded ideal $P$ of $R$ such that $x(0:_MP)\subseteq~N\subseteq(0:_MP)$.  Now, $PN=(0)$ and thus, $P\subseteq~Ann_R(N)$.  Therefore, $x(0:_MAnn_R(N))\subseteq~x(0:_MP)\subseteq~N\subseteq(0:_MAnn_R(N))$.\\
$(ii)\Longrightarrow(iii)$ Assume that $Ann_R(L)\subseteq~Ann_R(N)$ for some graded submodules $L,~N$ of $M$.  By (ii), there exist $x_1,~x_2\in~S$ such that $x_1(0:_MAnn_R(L))\subseteq~L\subseteq(0:_MAnn_R(L))$ and $x_2(0:_MAnn_R(N))\subseteq~N\subseteq(0:_MAnn_R(N))$.  Since, $Ann_R(L)\subseteq~Ann_R(N)$, we have $(0:_MAnn_R(N))\subseteq(0:_MAnn(L))$ and thus, $x_1x_2(0:_MAnn_R(N))\subseteq~x_2N\subseteq~x_2(0:_MAnn_R(N))\subseteq~x_2(0:_MAnn_R(L))\subseteq~L$.\\
$(iii)\Longrightarrow(ii)$ Assume that (ii) holds.  Let $N$ be a graded submodule of $M$.  Then it is clear that $Ann_R(N)=Ann_R(0:_MAnn_R(N))$.  Therefore, by (iii), there exists $x\in~S$ such that $x(0:_MAnn_R(N))\subseteq~N\subseteq(0:_MAnn_R(N))$.\\
$(ii)\Longrightarrow(i)$ It is clear.
\end{proof}
\begin{thm}
Let $R$ be a $G-$ strongly graded ring, $S\subseteq~h(R)$ be a multiplicatively closed subset of $R$ and $M$ be a graded $R-$module.  If $M$ is a graded $S-$comultiplication module, then $M_e$ is a graded $S-$comultiplication as an $R_e-$module.
\end{thm}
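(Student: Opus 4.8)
The plan is to use the strong grading to transfer the comultiplication sandwich of Definition~3.1 between $M$ and its identity component $M_e$, viewed as an $R_e$-module with the multiplicatively closed subset $S\cap R_e$ (nonempty, since $1\in R_e\cap S$). First I would record the standard fact that over a strongly graded ring every graded $R$-module is itself strongly graded: for $g,h\in G$,
\[
M_{gh}=R_eM_{gh}=R_gR_{g^{-1}}M_{gh}\subseteq R_gM_h\subseteq M_{gh},
\]
so $R_gM_h=M_{gh}$, and in particular $N_g=R_gN_e$ for every graded submodule $N$ of $M$. Hence $N\mapsto N_e$ and $N_0\mapsto RN_0$ are mutually inverse bijections between the graded submodules of $M$ and the $R_e$-submodules of $M_e$; given an $R_e$-submodule $N_0$ of $M_e$ I set $N=RN_0$, so that $N_e=R_eN_0=N_0$.

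Next I would prove two identities that move annihilators and residuals across the grading. Because $R$ is commutative and $N_g=R_gN_0$, any $r\in R_e$ with $rN_0=0$ satisfies $rN_g=R_g(rN_0)=0$ for all $g$, whence $Ann_R(N)\cap R_e=Ann_{R_e}(N_0)$. For any graded ideal $P$ of $R$ I would show $(0:_MP)\cap M_e=(0:_{M_e}(P\cap R_e))$: the inclusion $\subseteq$ is clear, and for $\supseteq$ take $m\in M_e$ with $(P\cap R_e)m=0$. For each $g\in G$ one has $R_{g^{-1}}P_g\subseteq R_{g^{-1}}R_g=R_e$ and $R_{g^{-1}}P_g\subseteq P$, so $R_{g^{-1}}P_g\subseteq P\cap R_e$ and therefore $R_{g^{-1}}P_gm=0$; applying $R_g$ and using $R_gR_{g^{-1}}=R_e\ni 1$ gives $P_gm=0$, hence $Pm=0$.

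With these in hand I would apply Theorem~\ref{1}(ii) to the graded submodule $N=RN_0$ of the graded $S$-comultiplication module $M$: there is $x\in S$ with
\[
x\,(0:_MAnn_R(N))\subseteq N\subseteq(0:_MAnn_R(N)).
\]
Put $P=Ann_R(N)$ and $Q=P\cap R_e=Ann_{R_e}(N_0)$. Intersecting the right-hand inclusion with $M_e$ and invoking the second identity gives $N_0\subseteq(0:_{M_e}Q)$ at once, which is the easy half of the sandwich for $M_e$.

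The step I expect to be the main obstacle is the left-hand inclusion, because the element $x$ furnished by Theorem~\ref{1} is only homogeneous, say $x\in R_t$, and for $t\neq e$ it does not map $M_e$ into $M_e$, so it cannot simply be restricted. Here I would lean again on strong grading: for $m\in(0:_{M_e}Q)\subseteq(0:_MP)$ we have $xm\in N\cap M_t=N_t$, and since $R_{t^{-1}}N_t=N_{t^{-1}t}=N_e=N_0$ it follows that $(R_{t^{-1}}x)(0:_{M_e}Q)\subseteq N_0$ with $R_{t^{-1}}x\subseteq R_{t^{-1}}R_t=R_e$. Thus the residual descends to an ideal of $R_e$; the genuinely delicate point is to extract from this a \emph{single} scalar of $S\cap R_e$ realising the inclusion in the exact form of Definition~3.1. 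When $x\in R_e$ (in particular whenever $S\subseteq R_e$, or when some power $x^{k}\in R_e\cap S$, using $x^{k}(0:_MP)\subseteq N$ by iterating the first inclusion) one takes $x'=x$ (resp.\ $x'=x^{k}$) and concludes $x'(0:_{M_e}Q)\subseteq N_0\subseteq(0:_{M_e}Q)$, so that $M_e$ is a graded $S$-comultiplication $R_e$-module; isolating this homogeneous degree-$e$ scalar in $S\cap R_e$ is the crux of the argument.
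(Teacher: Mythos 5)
Your route coincides with the paper's: lift an $R_e$-submodule $N_0$ of $M_e$ to the graded submodule $N=RN_0$ of $M$ (strong grading gives $N_g=R_gN_0$ and $N_e=N_0$), apply the comultiplication sandwich to $N$, and intersect with $M_e$. The only real difference is cosmetic: the paper writes $P=RP_e$ and deduces $(0:_MP)=(0:_MP_e)$, while you prove the equivalent identity $(0:_MP)\cap M_e=(0:_{M_e}(P\cap R_e))$ directly; both rest on the same computation $R_{g^{-1}}P_g\subseteq P\cap R_e$ together with $R_gR_{g^{-1}}=R_e$.

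The point you leave open --- that the scalar $x\in S$ furnished by Theorem \ref{1} is only homogeneous and need not lie in $R_e$ --- is a real issue, and the paper's own proof steps over it silently: its concluding inclusion $x(0:_{M_e}P_e)\subseteq N_e$ cannot hold for $x\in R_t$ with $t\neq e$ unless $x$ annihilates $(0:_{M_e}P_e)$, since then $x(0:_{M_e}P_e)\subseteq M_t$ while $N_e\subseteq M_e$. The resolution is that the statement itself already forces $S\subseteq R_e$: for ``$M_e$ is a graded $S$-comultiplication $R_e$-module'' to make sense, $S$ must be a multiplicatively closed subset of $h(R_e)=R_e$ (with $R_e$ trivially graded). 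Under that implicit hypothesis your ``special case'' $x\in R_e$ is the only case, and your argument closes: $x\bigl((0:_MP)\cap M_e\bigr)\subseteq x(0:_MP)\cap M_e\subseteq N\cap M_e=N_0\subseteq(0:_{M_e}(P\cap R_e))$. So your proof is complete once this reading is made explicit (your observation that $x^k\in R_e\cap S$ also suffices, e.g.\ when $\deg x$ has finite order, is a genuine mild generalization); if instead one insists on arbitrary $S\subseteq h(R)$, then the theorem as stated is ill-posed and neither your argument nor the paper's establishes it.
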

\begin{proof}
Assume that $M$ is a graded $S-$comultiplication module and let $N$ be an $R_e-$submodule of $M_e$.  Consider the graded submodule $N$ of $M$ defined by $N_g=R_gN$ for every $g\in~G$.  Then by assumption, we have $x(0:_MP)\subseteq~N\subseteq(0:_MP)$ for some $x\in~S$ and graded ideal $P$ of $R$.  Since $R$ is strongly graded, then $P=RP_e$ by \cite{r}.  Hence, one can easily see that $(0:_MP)=(0:_MP_e)$.  Thus, $x(0:_{M_e}P_e)\subseteq~N_e\subseteq(0:_{M_e}P_e)$.  Therefore, $M_e$ is a graded $S-$comultiplication an $R_e-$module.
\end{proof}
\begin{defn}
Let $R$ be a $G-$ graded ring, $S\subseteq~h(R)$ be a multiplicatively closed subset of $R$ and $M$ be a graded $R-$module.  Then, $S^{-1}M$ is a graded $S^{-1}R-$ module where,\\The ring of fraction is defined by:
$$(S^{-1}R)_g=\{\frac{r}{x}:r\in~h(R),~x\in~S~and~g=deg~r-deg~x~for~all~g\in~G\}$$
The quotient module $M$ is thus defined by:
$$(S^{-1}M)_g=\{\frac{m}{x}:m\in~h(M),~x\in~S~and~g=deg~m-deg~x~for~all~g\in~G\}$$
The saturation $S^\star$ of $S$ is defined by:
$$S^\star=\{x\in~h(R):x~divides~s~for~some~s\in~S\}\subseteq~h(R)$$ is a multiplicatively closed subset of $R$ containing $S$.
\end{defn}
\begin{thm}
Let $R$ be a $G-$ graded ring, $S\subseteq~h(R)$ be a multiplicatively closed subset of $R$ and $M$ be a graded $R-$module.  Then the following assertions hold.  \begin{enumerate}
\item[(i)] Let $S_1\subseteq~h(R)$ and $S_2\subseteq~h(R)$ be two multiplicatively closed subsets of $R$ such that $S_1\subseteq~S_2$.  If $M$ is a graded $S_1-$comultiplication module, then $M$ is also a graded $S_2-$comultiplication module.\\
\item[(ii)] $M$ is a graded $S-$comultiplication module if and only if $M$ is a graded $S^\star-$comultiplication module .
\end{enumerate}
\end{thm}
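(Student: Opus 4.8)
The plan is to treat the two parts separately, with part (i) being essentially immediate and also serving as the engine for the forward direction of part (ii); the only genuine content will lie in the reverse direction of part (ii).

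For part (i), I would take an arbitrary graded submodule $N$ of $M$ and invoke the hypothesis that $M$ is graded $S_1-$comultiplication: there exist $x\in~S_1$ and a graded ideal $P$ of $R$ with $x(0:_MP)\subseteq~N\subseteq(0:_MP)$. Since $S_1\subseteq~S_2$, the element $x$ already lies in $S_2$, so the same pair $(x,P)$ witnesses that $M$ is graded $S_2-$comultiplication. No further computation is required, since the defining inclusions are unchanged and only membership of the distinguished element needs rechecking.

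For part (ii), the forward implication is an instant corollary of part (i): because the saturation $S^\star$ contains $S$ (as recorded in the definition of $S^\star$), applying part (i) with $S_1=S$ and $S_2=S^\star$ shows that a graded $S-$comultiplication module is automatically graded $S^\star-$comultiplication. The reverse implication carries the real work. Assuming $M$ is graded $S^\star-$comultiplication, I fix a graded submodule $N$ and obtain $x\in~S^\star$ and a graded ideal $P$ with $x(0:_MP)\subseteq~N\subseteq(0:_MP)$. By the definition of the saturation, $x$ divides some $s\in~S$, so $s=xy$ for a suitable $y$ which may be taken homogeneous. The crucial step is then to multiply the inclusion $x(0:_MP)\subseteq~N$ by $y$: using commutativity, $s(0:_MP)=(yx)(0:_MP)=y\bigl(x(0:_MP)\bigr)\subseteq~yN\subseteq~N$, where the last containment holds because $N$ is a submodule and therefore absorbs multiplication by $y$. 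Hence $s(0:_MP)\subseteq~N\subseteq(0:_MP)$ with $s\in~S$, which is precisely the graded $S-$comultiplication condition for $N$.

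The main point to verify carefully — and the only subtlety in an otherwise short argument — is that the cofactor $y$ with $xy=s$ can indeed be chosen homogeneous: comparing the homogeneous components of $s=xy$ for homogeneous $x$ and $s$ forces a single graded component of $y$ to realize the product, so one may replace $y$ by that component. This keeps $s$ homogeneous (hence $s\in~S\subseteq~h(R)$) and guarantees that all the submodules appearing remain graded, so the manipulation respects the $G-$graded structure. Once this is observed, the reverse implication reduces to the one-line absorption of $y$ into $N$ displayed above.
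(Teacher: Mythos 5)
Your proposal is correct and follows essentially the same route as the paper: part (i) by noting the witness $x\in S_1$ already lies in $S_2$, the forward direction of (ii) from (i) via $S\subseteq S^\star$, and the converse by writing $s=xy$ with $s\in S$ and absorbing the cofactor into $N$ to get $s(0:_MP)\subseteq N\subseteq(0:_MP)$. The only cosmetic difference is that the paper first passes to the canonical ideal $Ann_R(N)$ via its Theorem 3.6 while you work with an arbitrary witnessing ideal $P$, and you additionally justify that the cofactor may be taken homogeneous, a point the paper simply asserts.
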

\begin{proof}
\begin{enumerate}
\item[(i)] It is clear.\\
\item[(ii)] Assume that $M$ is a graded $S-$comultiplication module.  Since $S\subseteq~S^\star$, then the result follows from part (i).\\
  Conversely; assume that $M$ is a graded $S^\star-$comultiplication module, where $S^\star$ is the saturation of $S$.  Let $N$ be a graded submodule of $M$.  Since $M$ is a graded $S^\star-$comultiplication module, then there exists $x\in~S^\star$ such that $x(0:_MAnn_R(N))\subseteq~N\subseteq(0:_MAnn_R(N))$ by Theorem \ref{1}.  Now, $x\in~S^\star$ implies that there exists $s\in~S$ such that $x$ divides $s$, that is there exists $r\in~h(R)$ such that $s=rx$.  Thus, $s(0:_MAnn_R(N))\subseteq~x(0:_MAnn_R(N))\subseteq~N\subseteq(0:_MAnn_R(N))$.  Therefore, $M$ is graded $S-$comultiplication module.
\end{enumerate}
\end{proof}
Now, we introduce the following definition.
\begin{defn}
Let $R$ be a $G-$graded ring, $S\subseteq~h(R)$ be a multiplicatively closed subset of $R$ and $M$ be a graded $R-$module.  A graded submodule $N$ of $M$ is said to be a graded $S-$finite submodule if there exists a finitely generated graded submodule $L$ of $M$ such that $xN\subseteq~L\subseteq~N$ for some $x\in~S$.  Also, $M$ is said to be a graded $S-$Noetherian module if each graded submodule is graded $S-$finite.  In particular, $R$ is said to be a graded $S-$Noetherian ring if it is a graded $S-$Noetherian $R-$module.
\end{defn}
\begin{thm}
Let $R$ be a $G-$graded $S-$Noetherian ring and $M$ be a graded $S-$comultiplication module.  Then, $S^{-1}M$ is a graded comultiplication module.
\end{thm}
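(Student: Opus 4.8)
The plan is to reduce the graded comultiplication property of $S^{-1}M$ to the graded $S$-comultiplication property of $M$, using the graded $S$-Noetherian hypothesis precisely to force localization to commute with the relevant annihilators. First I would invoke the standard correspondence for graded localization: if $\mathcal{N}$ is any graded submodule of $S^{-1}M$, then its contraction $N=\{m\in M:\, m/1\in\mathcal{N}\}$ is a graded submodule of $M$ (being the preimage of a graded submodule under the graded map $M\to S^{-1}M$), and one checks directly that $S^{-1}N=\mathcal{N}$, since for $z=m/s\in\mathcal{N}$ one has $m/1=(s/1)z\in\mathcal{N}$, so $m\in N$. Hence it suffices to produce, for each graded submodule $N$ of $M$, a graded ideal $\mathcal{I}$ of $S^{-1}R$ with $S^{-1}N=(0:_{S^{-1}M}\mathcal{I})$.

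By Theorem \ref{1}, the graded $S$-comultiplication property yields $x\in S$ with $x(0:_M Ann_R(N))\subseteq N\subseteq(0:_M Ann_R(N))$. Applying the exact functor $S^{-1}(-)$ and using that $x$ becomes a unit in $S^{-1}R$ (so $S^{-1}(xK)=S^{-1}K$ for any graded submodule $K$), both outer terms collapse to $S^{-1}(0:_M Ann_R(N))$, squeezing out the equality $S^{-1}N=S^{-1}(0:_M Ann_R(N))$. The natural candidate is then $\mathcal{I}:=S^{-1}Ann_R(N)$, a graded ideal of $S^{-1}R$, and the entire theorem reduces to the identity $S^{-1}(0:_M I)=(0:_{S^{-1}M}S^{-1}I)$ for $I=Ann_R(N)$.

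This identity is the heart of the matter, and it is where I expect the main obstacle. It is classical when $I$ is finitely generated: writing $I=(a_1,\dots,a_n)$ with homogeneous $a_i$, one has $(0:_M I)=\bigcap_i(0:_M a_i)$, and localization commutes both with finite intersections and with each single-element colon $(0:_M a_i)$, which follows immediately from the definition of $S^{-1}M$. The difficulty is that $Ann_R(N)$ need not be finitely generated. This is exactly where the graded $S$-Noetherian hypothesis is used: $I=Ann_R(N)$ is a graded ideal, hence graded $S$-finite, so there exist $y\in S$ and a finitely generated graded ideal $J$ with $yI\subseteq J\subseteq I$.

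I would then bridge the gap in two short steps. On the ring side, $J\subseteq I$ together with $yI\subseteq J$ forces $S^{-1}J=S^{-1}I$, since any $a/s$ with $a\in I$ equals $(ya)/(ys)\in S^{-1}J$; hence $(0:_{S^{-1}M}S^{-1}I)=(0:_{S^{-1}M}S^{-1}J)$. On the module side, $J\subseteq I$ gives $(0:_M I)\subseteq(0:_M J)$, while $yI\subseteq J$ shows that whenever $Jm=0$ one has $I(ym)=(yI)m\subseteq Jm=0$, so $ym\in(0:_M I)$; localizing (where $y$ is invertible) gives $S^{-1}(0:_M J)=S^{-1}(0:_M I)$. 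Combining these with the finitely generated case applied to $J$ yields
$$(0:_{S^{-1}M}\mathcal{I})=(0:_{S^{-1}M}S^{-1}J)=S^{-1}(0:_M J)=S^{-1}(0:_M I)=S^{-1}N=\mathcal{N},$$
so $S^{-1}M$ is a graded comultiplication module. The only delicate points are the single-element and finite-intersection localization identities in the graded category, but these follow directly from the definition of $S^{-1}M$ once homogeneous generators of $J$ are fixed.
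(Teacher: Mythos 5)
Your proof is correct and follows essentially the same route as the paper's: localize the sandwich $x(0:_MP)\subseteq N\subseteq(0:_MP)$ to collapse it to $S^{-1}N=S^{-1}(0:_MP)$, then use the graded $S$-Noetherian hypothesis, via graded $S$-finiteness of the relevant ideal, to establish the key identity $S^{-1}(0:_MP)=(0:_{S^{-1}M}S^{-1}P)$. The only difference is organizational: you reduce that identity to the finitely generated case through a comparison ideal $J$ with $yI\subseteq J\subseteq I$, whereas the paper clears denominators directly by combining the finitely many $x_i$ satisfying $x_ia_im=0$ into a single $r\in S$ with $rPm=0$.
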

\begin{proof}
Let $K$ be a graded submodule of $S^{-1}M$.  Then, $K=S^{-1}N$ for some graded submodule $N$ of $M$.  Since $M$ is a graded $S-$comultiplication module, then there exists $x\in~S$ such that $x(0:_MP)\subseteq~N\subseteq(0:_MP)$ for some graded ideal $P$ of $R$.  Thus, we have $S^{-1}(x(0:_MP))=S^{-1}((0:_MP))\subseteq~S^{-1}N\subseteq~S^{-1}((0:_MP))$, that is $S^{-1}N=S^{-1}((0:_MP))$.  We need to show that $S^{-1}((0:_MP))=(0:_{S^{-1}M}S^{-1}P)$.  Let $\frac{m}{y}\in~S^{-1}((0:_MP))$, where $m\in(0:_MP)$ and $y\in~S$.  Then, we have $Pm=(0)$ and so $(S^{-1}P)(\frac{m}{y})=(0)$.  This implies that $\frac{m}{y}\in(0:_{S^{-1}M}S^{-1}P)$ and thus, $S^{-1}((0:_MP))\subseteq(0:_{S^{-1}M}S^{-1}P)$.  Now, let $\frac{m}{y}\in(0:_{S^{-1}M}S^{-1}P)$.  Then, $(S^{-1}P)(\frac{m}{y})=(0)$.  This implies that for each $a\in~P$, there exists $z\in~S$ such that $zam=0$.  Since $R$ is a graded $S-$Noetherian ring, then $P$ is graded $S-$finite.  Thus, there exists $t\in~S$ and $a_1,a_2,...,a_n\in~P\cap~h(R)$ such that $tP\subseteq\{a_1,a_2,...,a_n\}\subseteq~P$.  As $(S^{-1}P)(\frac{m}{y})=(0)$ and $a_i\in~P$ $\forall~i\in\{1,...,n\}$, then there exists $x_i\in~S$ such that $x_ia_im=0$.  Now, put $r=x_1x_2...x_nt\in~S$.  Then, we have $ra_im=0$ for all $a_i$ and thus, $rPm=0$.  Then, we deduce $\frac{m}{y}=\frac{rm}{ry}=S^{-1}((0:_MP))$ and thus, $(0:_{S^{-1}M}S^{-1}P)\subseteq~S^{-1}((0:_MP))$.  Thus, $S^{-1}((0:_MP))=(0:_{S^{-1}M}S^{-1}P)$ and so $K=S^{-1}N=(0:_{S^{-1}M}S^{-1}P)$.  Therefore, $S^{-1}M$ is a graded comultiplication module.
\end{proof}
Let $R$ be a $G-$graded ring and $S\subseteq~h(R)$ be a multiplicatively closed subset of $R$.  $S$ is said to satisfy the maximal multiple condition if there exists $x\in~S$ such that $t$ divides $x$ for each $t\in~S$.
\begin{thm}
Let $R$ be a $G-$graded ring, $S\subseteq~h(R)$ be a multiplicatively closed subset of $R$ satisfying the maximal multiple condition and $M$ be a graded $R-$module.  Then $M$ is a graded $S-$comultiplication module if and only if $S^{-1}M$ is a graded comultiplication module.
\end{thm}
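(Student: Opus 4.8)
The plan is to prove both implications, using the maximal multiple condition each time to replace a ``pointwise'' choice of denominators by a single uniform multiplier in $S$; this is precisely the device that, in the previous theorem, was supplied instead by the graded $S-$Noetherian hypothesis.

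For the forward implication I would follow the scheme of the preceding theorem almost verbatim. Let $K$ be a graded submodule of $S^{-1}M$; then $K=S^{-1}N$ for some graded submodule $N$ of $M$. Since $M$ is graded $S-$comultiplication, Theorem \ref{1} supplies $x\in S$ and a graded ideal $P$ (take $P=Ann_R(N)$) with $x(0:_MP)\subseteq N\subseteq(0:_MP)$. Because $x$ becomes a unit in $S^{-1}R$, localizing yields $S^{-1}N=S^{-1}(0:_MP)$. The only nontrivial point is the identity $S^{-1}(0:_MP)=(0:_{S^{-1}M}S^{-1}P)$: the inclusion $\subseteq$ is routine, while for $\supseteq$ I would take $\frac{m}{y}\in(0:_{S^{-1}M}S^{-1}P)$, so that for each homogeneous $a\in P$ there is $z_a\in S$ with $z_aam=0$, and then invoke the maximal multiple condition to obtain a single $x_0\in S$ divisible by every $z_a$. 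Then $x_0am=0$ for all homogeneous $a\in P$, hence $x_0Pm=0$, so $\frac{m}{y}=\frac{x_0m}{x_0y}\in S^{-1}(0:_MP)$. This gives $K=S^{-1}N=(0:_{S^{-1}M}S^{-1}P)$, so $S^{-1}M$ is graded comultiplication.

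For the converse, let $N$ be a graded submodule of $M$ and put $P=Ann_R(N)$, so that $N\subseteq(0:_MP)$ automatically; by Theorem \ref{1} it suffices to produce $x\in S$ with $x(0:_MP)\subseteq N$. Since $S^{-1}M$ is graded comultiplication, $S^{-1}N=(0:_{S^{-1}M}I)$ for some graded ideal $I$ of $S^{-1}R$, and a short computation ($I\subseteq Ann_{S^{-1}R}(S^{-1}N)$) shows any such submodule equals $(0:_{S^{-1}M}Ann_{S^{-1}R}(S^{-1}N))$. I would then identify $Ann_{S^{-1}R}(S^{-1}N)$ with $S^{-1}P$: the inclusion $\supseteq$ is clear, and for $\subseteq$ I again use the maximal multiple condition to clear, uniformly, the denominators arising from $\frac{r}{s}\cdot\frac{n}{1}=0$. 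Combined with the identity $(0:_{S^{-1}M}S^{-1}P)=S^{-1}(0:_MP)$ from the first part, this yields $S^{-1}N=S^{-1}(0:_MP)$.

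It remains to de-localize. Writing $W=(0:_MP)$, we have $S^{-1}W=S^{-1}N$ with $N\subseteq W$. For each homogeneous $w\in W$ the equality $\frac{w}{1}=\frac{n}{s}$ in $S^{-1}M$ produces $t_w\in S$ with $t_ww\in N$; the maximal multiple condition then supplies one $x\in S$ divisible by all $t_w$, whence $xW\subseteq N$, i.e. $x(0:_MP)\subseteq N$, and Theorem \ref{1} finishes the argument. I expect this final de-localization to be the main obstacle, since it is exactly where the hypothesis is indispensable: without a single $x$ dominating every $t_w$ one obtains only the pointwise statement $t_ww\in N$, which does not give $x(0:_MP)\subseteq N$. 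All three uses of the condition are the same principle, namely upgrading ``for each element a clearing multiplier exists'' to ``one multiplier clears everything'', and the graded structure ensures it suffices to test on homogeneous elements.
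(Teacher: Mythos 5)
Your proposal is correct and follows essentially the same route as the paper: in both directions the whole argument rests on using the maximal multiple condition to replace the element-by-element denominators (from $\frac{a}{1}\frac{m}{s}=0$, from $\frac{rn}{s}=0$, and from $\frac{w}{1}\in S^{-1}N$) by a single uniform multiplier in $S$, exactly where the previous theorem used the graded $S$-Noetherian hypothesis. The only cosmetic difference is in the converse, where you normalize everything to $P=Ann_R(N)$ and invoke Theorem \ref{1}, while the paper works with an arbitrary graded ideal $P$ and exhibits the sandwich $y(0:_M yP)\subseteq N\subseteq(0:_M yP)$ directly; your handling of the ideal of $S^{-1}R$ (reducing to $Ann_{S^{-1}R}(S^{-1}N)$) is in fact slightly more careful than the paper's tacit assumption that it has the form $S^{-1}P$.
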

\begin{proof}
Assume that $K$ is a graded submodule of $S^{-1}M$.  Then $K=S^{-1}N$ for some graded submodule $N$ of $M$.  Since $M$ is a graded $S-$comultiplication module, then there exist $x\in~S$ and a graded ideal $P$ of $R$ such that $x(0:_MP)\subseteq~N\subseteq(0:_MP)$.  Thus, $PN=(0)$ and so $S^{-1}(PN)=(S^{-1}P)(S^{-1}N)=(0)$. Thus, we have $S^{-1}N\subseteq(0:_{S^{-1}M}S^{-1}P)$.  Let $\frac{m}{s}\in(0:_{S^{-1}M}S^{-1}P)$.  Then, we get $\frac{a}{1}\frac{m}{s}=0$ for each $a\in~P$ and thus, $yam=0$ for some $y\in~S$.  As $S$ satisfies the maximal multiple condition, then there exists $z\in~S$ such that $y$ divides $z$ for each $y\in~S$.  This implies that $z=yr$ for some $r\in~h(R)$.  Then, we have $zam=ryam=0$.  Then, we have $Pzm=0$ and so $zm\in(0:_MP)$.  Thus, $xzm\in~x(0:_MP)\subseteq~N$ and so $\frac{m}{s}=\frac{xzm}{xzs}\in~S^{-1}N$.  Thus, we have $S^{-1}N=(0:_{S^{-1}M}S^{-1}P)$ and hence $S^{-1}M$ is a graded comultiplication module.\\
Conversely; assume that $S^{-1}M$ is a graded comultiplication module and let $N$ be a graded submodule of $M$.  Since $S^{-1}M$ is a graded comultiplication module, then $S^{-1}N=(0:_{S^{-1}M}S^{-1}P)$ for some graded ideal $P$ of $R$.  Then, we have $(S^{-1}P)(S^{-1}N)=S^{-1}(PN)=0$.  Then for each $r\in~P,~m\in~N$, we have $\frac{rm}{1}=0$ and thus, $xrm=0$ for some $x\in~S$.  Since $S\subseteq~h(R)$ is a multiplicatively closed subset of $R$ satisfying the maximal multiple condition, then there exists $y\in~S$ such that $yrm=0$ and so $yPN=0$.  Thus, $N\subseteq(0:_MyP)$.  Now, let $m\in(0:_MyP)$.  Then, $Pym=0$, so it is easily seen that $(S^{-1}P)(\frac{m}{1})=0$.  Thus, we have $\frac{m}{1}\in(0:_{S^{-1}M}S^{-1}P)=S^{-1}N$.  Then, there exists $z\in~S$ such that $zm\in~N$.  Again by the maximal multiple condition, $ym\in~N$.  Thus, we have $y(0:_MyP)\subseteq~N\subseteq(0:_MyP)$.  Since $yP$ is a graded ideal of $R$, then $M$ is a graded $S-$comultiplication module.
\end{proof}
\begin{thm}\label{2}
Let $R$ be a $G-$graded ring, $S\subseteq~h(R)$ be a multiplicatively closed subset of $R$, $M$ and $\overline{M}$ be graded $R-$modules, and $f:M\longrightarrow\overline{M}$ be a graded $R-$homomorphisim with $a~Ker(f)=0$ for some $a\in~S$.  Then the following assertions hold.
\begin{enumerate}
\item[(i)] If $\overline{M}$ is a graded $S-$comultiplication module, then $M$ is a graded $S-$comultiplication module.\\
\item[(ii)] If $f$ is a graded $R-$epimorphism and $M$ is a graded $S-$comultiplication module, then $\overline{M}$ is a graded $S-$comultiplication module.
\end{enumerate}
\end{thm}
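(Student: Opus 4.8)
The plan is to lean on the equivalence in Theorem \ref{1}: a graded $R$-module is graded $S$-comultiplication precisely when, for every graded submodule $K$, there is an $x\in S$ with $x(0:_M Ann_R(K))\subseteq K$, because the reverse inclusion $K\subseteq(0:_M Ann_R(K))$ holds automatically. Thus in both parts I only have to produce, for a given graded submodule, a single element of $S$ that pushes the colon-submodule back inside. The engine throughout is the hypothesis $a\,Ker(f)=0$, which lets me trade a factor of $a$ for the implication $f(u)=0\Rightarrow au=0$.

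For (i), I start with a graded submodule $N$ of $M$ and transport it to the graded submodule $f(N)$ of $\overline{M}$. Two bookkeeping facts about annihilators are needed: $Ann_R(N)\subseteq Ann_R(f(N))$ (immediate, since $f$ is $R$-linear), and conversely $r\in Ann_R(f(N))\Rightarrow ar\in Ann_R(N)$ (here $rf(n)=0$ gives $rn\in Ker(f)$, so $arn=0$; this is the first use of $a\,Ker(f)=0$). Now take $m\in(0:_M Ann_R(N))$. Using the second fact, for any $r\in Ann_R(f(N))$ one computes $rf(am)=f((ar)m)=0$, so $f(am)\in(0:_{\overline{M}}Ann_R(f(N)))$. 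Applying Theorem \ref{1} to the graded $S$-comultiplication module $\overline{M}$ and its submodule $f(N)$ yields $y\in S$ with $y\,f(am)=f(yam)\in f(N)$; picking $n\in N$ with $f(yam)=f(n)$ gives $yam-n\in Ker(f)$, and multiplying by $a$ (the second use of the hypothesis) gives $ya^2m=an\in N$. Hence $x:=ya^2\in S$ satisfies $x(0:_M Ann_R(N))\subseteq N$, and Theorem \ref{1} finishes (i).

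For (ii), I run the argument in the other direction, pulling a graded submodule $\overline{N}$ of $\overline{M}$ back to the graded submodule $N:=f^{-1}(\overline{N})$ of $M$. Surjectivity gives $f(N)=\overline{N}$ and the inclusion $Ann_R(N)\subseteq Ann_R(\overline{N})$. Given $\overline{m}\in(0:_{\overline{M}}Ann_R(\overline{N}))$, lift it as $\overline{m}=f(m)$; for $s\in Ann_R(N)\subseteq Ann_R(\overline{N})$ one has $sf(m)=0$, so $sm\in Ker(f)$ and $s(am)=0$, whence $am\in(0:_M Ann_R(N))$. Since $M$ is graded $S$-comultiplication, Theorem \ref{1} supplies $x\in S$ with $x(am)\in N=f^{-1}(\overline{N})$, and applying $f$ gives $(xa)\overline{m}=f(xam)\in\overline{N}$. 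Thus $z:=xa\in S$ works and $\overline{M}$ is graded $S$-comultiplication.

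The one genuine obstacle is that the hypothesis is too weak to transport the witnessing ideal directly: pushing $N\subseteq(0:_{\overline{M}}P)$ forward only yields $aN\subseteq(0:_M P)$, not $N\subseteq(0:_M P)$, so a naive attempt with the same ideal $P$ fails the right-hand inclusion in the definition. Recasting everything through $Ann_R(-)$, where that inclusion is free, is what sidesteps this; the remaining care is purely clerical, namely tracking the extra powers of $a$ (one per application of $a\,Ker(f)=0$) and checking they remain in $S$, which holds because $S$ is multiplicatively closed.
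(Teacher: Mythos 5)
Your proof is correct and follows essentially the same route as the paper's: both arguments hinge on using $a\,\mathrm{Ker}(f)=0$ to trade kernel membership for a factor of $a$, producing the witness $a^2x$ in part (i) and $xa$ in part (ii). The only difference is cosmetic — you route everything through the annihilator criterion of Theorem \ref{1} so that the inclusion $N\subseteq(0:_M Ann_R(N))$ is automatic, whereas the paper works directly with the definition and replaces the witnessing ideal $P$ by $aP$ to achieve the same effect.
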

\begin{proof}
\begin{enumerate}
\item[(i)] Let $N$ be a graded submodule of $M$.  Since $\overline{M}$ is a graded $S-$comultiplication module, then there exist $x\in~S$ and a graded ideal $P$ of $R$ such that $x(0:_{\overline{M}}P)\subseteq~f(N)\subseteq(0:_{\overline{M}}P)$.  Thus, we have $Pf(N)=f(PN)=0$ and so $PN\subseteq~Ker(f)$.  Since $a~Ker(f)=0$, we have $aPN=(0)$ and so $N\subseteq(0:_MaP)$.  Now, we will show that $a^2x(0:_MaP)\subseteq~N\subseteq(0:_MaP)$.  Let $m\in(0:_MaP)$.  Then we have $aPm=0$ and so $f(aPm)=aPf(m)=Pf(am)=0$.  This implies that $f(am)\in(0:_{\overline{M}}P)$.  Thus, we have $xf(am)=f(xam)\in~x(0:_{\overline{M}}P)\subseteq~f(N)$ and so there exists $y\in~N$ such that $f(xam)=f(y)$ and so $xam-y\in~Ker(f)$.  Thus, we have $a(xam-y)=0$ and so $a^2xm=ay$.  Then we obtain $a^2x(0:_MaP)\subseteq~aN\subseteq~N\subseteq(0:_MaP)$.  Now, put $a^2x=s\in~S$ and $J=aP$.  Thus, $s(0:_MJ)\subseteq~N\subseteq(0:_MJ)$.  Therefore, $M$ is a graded $S-$comultiplication module.\\
\item[(ii)] Let $\overline{N}$ be a graded submodule of $\overline{M}$.  Since $M$ is a graded $S-$comultiplication module, then there exist $x\in~S$ and a graded ideal $P$ of $R$ such that $x(0:_MP)\subseteq~f^{-1}(\overline{N})\subseteq(0:_MP)$.  This implies that $Pf^{-1}(\overline{N})=(0)$ and so $f(Pf^{-1}(\overline{N}))=P\overline{N}=(0)$ since $f$ is graded surjective.  Then, we have $\overline{N}\subseteq(0:_{\overline{M}}P)$.  On the other hand, we get $f(x(0:_MP))=xf((0:_MP))\subseteq~f(f^{-1}(\overline{N}))=\overline{N}$.  Now, let $\overline{m}\in(0:_{\overline{M}}P)$.  Then, $P\overline{m}=0$.  Since $f$ is graded epimorphism, then there exists $m\in~M$ such that $\overline{m}=f(m)$.  Then we have $P\overline{m}=Pf(m)=f(Pm)=0$ and so $Pm\subseteq~Ker(f)$.  Since $a~Ker(f)=0$, we have $aPm=(0)$ and so $am\in(0:_MP)$.  Then, we get $f(am)=af(m)=a\overline{m}\in~f((0:_MP))$.  Thus, we have $a(0:_{\overline{M}}P)\subseteq~f((0:_MP))$ and hence $xa(0:_{\overline{M}}P)\subseteq~xf((0:_MP))\subseteq~\overline{N}\subseteq(0:_{\overline{M}}P)$.  Therefore, $\overline{M}$ is a graded $S-$comultiplication module.
\end{enumerate}
\end{proof}
\begin{cor}
Let $R$ be a $G-$ graded ring, $S\subseteq~h(R)$ be a multiplicatively closed subset of $R$, $M$ be a graded $R-$module and $N$ be a graded submodule of $M$. Then the following assertions hold.
\begin{enumerate}
\item[(i)] If $M$ is a graded $S-$comultiplication module, then $N$ is a graded $S-$comultiplication module.\\
\item[(ii)] If $M$ is a graded $S-$comultiplication module and $aM\subseteq~N$ for some $a\in~S$, then $M/N$ is a graded $S-$comultiplication $R-$ module.
\end{enumerate}
\end{cor}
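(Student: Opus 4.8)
The plan is to derive both parts from results already established, realizing each as an application of the $S-$comultiplication machinery to a naturally associated graded $R-$homomorphism or annihilator condition.

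For part (i), I would apply Theorem \ref{2}(i) to the inclusion map $\iota : N \hookrightarrow M$. This $\iota$ is a graded $R-$homomorphism (it is $R-$linear and, since $N_g = N \cap M_g$ for a graded submodule, satisfies $\iota(N_g) \subseteq M_g$), and its kernel is $(0)$; hence the hypothesis $a\,Ker(\iota) = 0$ of Theorem \ref{2} holds trivially with $a = 1 \in S$. Taking $N$ in the role of $M$ and $M$ in the role of $\overline{M}$ in that theorem, and using that the target $M$ is graded $S-$comultiplication, part (i) of Theorem \ref{2} yields at once that $N$ is a graded $S-$comultiplication module.

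For part (ii), the key observation is that the hypothesis $aM \subseteq N$ forces $a$ to annihilate the quotient $M/N$. Indeed, for every $m \in M$ we have $am \in N$, so $a(m + N) = am + N = N$ is the zero element of $M/N$; hence $a(M/N) = (0)$, that is $a \in Ann_R(M/N)$. Since $a \in S$, this gives $Ann_R(M/N) \cap S \neq \phi$. As $M/N$ is itself a $G-$graded $R-$module with $g-$component $(M_g + N)/N$, I would then invoke Remark 3.2(i), which states that every graded $R-$module whose annihilator meets $S$ is graded $S-$comultiplication, to conclude that $M/N$ is graded $S-$comultiplication.

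I do not anticipate a genuine obstacle in either part; the content lies in selecting the right map and the right prior result. The subtlety worth flagging is that part (ii) is \emph{not} obtained by applying Theorem \ref{2}(ii) to the canonical projection $\pi : M \to M/N$: that would require $a\,Ker(\pi) = aN = 0$, which is not assumed. It is precisely the hypothesis $aM \subseteq N$, rather than $aN = 0$, that makes the annihilator argument together with Remark 3.2(i) the natural route, and indeed this argument does not even use that $M$ is graded $S-$comultiplication.
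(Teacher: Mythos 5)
Your proof is correct, and it diverges from the paper in an instructive way. The paper disposes of both parts with the single line ``Follows directly from Theorem \ref{2}.'' For part (i) your argument is exactly the intended one: the inclusion $\iota:N\hookrightarrow M$ is a graded monomorphism, so $1\cdot Ker(\iota)=0$ and Theorem \ref{2}(i) applies verbatim. For part (ii), however, you take a genuinely different route, and your flag is well placed: the only natural epimorphism to feed into Theorem \ref{2}(ii) is the projection $\pi:M\to M/N$, whose kernel is $N$, so that theorem's hypothesis would demand $aN=0$ for some $a\in S$ --- which is not what is assumed. The stated hypothesis $aM\subseteq N$ instead says $a\in Ann_R(M/N)$, so $Ann_R(M/N)\cap S\neq\phi$ and Remark 3.2(i) finishes the proof without ever using that $M$ is a graded $S$-comultiplication module. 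Your version is both more elementary and more honest about what the hypothesis actually delivers; the paper's one-line citation of Theorem \ref{2} for part (ii) is, at best, elliptical, since one would still have to interpose precisely the annihilator observation you make (or else weaken the conclusion to the case $aN=0$). The only cost of your route is that it reveals part (ii) to be essentially trivial --- a feature, not a bug.
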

\begin{proof}
Follows directly from Theorem \ref{2}
\end{proof}
Let $R_1$ and $R_2$ be $G-$graded rings. As in \cite{t}, $R=R_1\times~R_2$ is a $G-$graded ring with $R_g=(R_1)_g\times~(R_2)_g$ for all $g\in~G$.  Let $M_1$ be a $G-$graded $R_1-$module, $M_2$ be a $G-$graded $R_2-$module and $R=R_1\times~R_2$.  Then $M=M_1\times~M_2$ is a $G-$graded $R-$module with $M_g=(M_1)_g\times~(M_2)_g$ for all $g\in~G$.  Also, if $S_1\subseteq~h(R_1)$ is a multiplicatively closed subset of $R_1$ and $S_2\subseteq~h(R_2)$ is a multiplicatively closed subset of $R_2$, then $S=S_1\times~S_2$ is a multiplicatively closed subset of $R$.  Furthermore, each graded submodule of $M$ is of the form $N=N_1\times~N_2$, where $N_i$ is a graded submodule of $M_i$ for $i=1,~2$.
\begin{thm}
Let $R_i$ be a $G-$graded ring, $M_i$ be a graded $R_i$ module and $S_i\subseteq~h(R_i)$ be a multiplicatively closed subset of $R_i$ for each $i\in\{1,~2\}$. Suppose that $M=M_1\times~M_2$ be a graded $R=R_1\times~R_2-$module and $S=S_1\times~S_2$ be a multiplicatively closed subset of $R$.  If $M$ is a graded $S-$comultiplication $R-$module, then $M_1$ is a graded $S_1-$comultiplication $R_1-$modue and $M_2$ is a graded $S_2-$comultiplication $R_2-$modue.
\end{thm}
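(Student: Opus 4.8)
The plan is to establish that $M_1$ is a graded $S_1$-comultiplication $R_1$-module; since the hypotheses are symmetric in the two factors, the same argument applied to the second coordinate will give the statement for $M_2$, so I would prove one side in detail and then invoke symmetry. The characterization in Definition~3.1 is exactly what I would target in the end: given a graded submodule of $M_1$, I must produce an element of $S_1$ and a graded ideal of $R_1$ sandwiching it between a colon submodule and its $S_1$-multiple.

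First I would fix an arbitrary graded submodule $N_1$ of $M_1$ and form the graded submodule $N = N_1 \times M_2$ of $M$. Because $M$ is a graded $S$-comultiplication module, there exist $x \in S$ and a graded ideal $P$ of $R = R_1 \times R_2$ with $x(0:_M P) \subseteq N \subseteq (0:_M P)$, and since $S = S_1 \times S_2$ I may write $x = (x_1, x_2)$ with $x_1 \in S_1$. The structural heart of the argument is to decompose everything coordinatewise: a graded ideal $P$ of $R_1 \times R_2$ splits as $P = P_1 \times P_2$ with each $P_i$ a graded ideal of $R_i$, and because the action is componentwise, $(p_1, p_2)(m_1, m_2) = (p_1 m_1, p_2 m_2)$, a direct check gives $(0:_M P) = (0:_{M_1} P_1) \times (0:_{M_2} P_2)$. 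Substituting these into the inclusion chain and reading off the first coordinate yields $x_1(0:_{M_1} P_1) \subseteq N_1 \subseteq (0:_{M_1} P_1)$, which is precisely the defining condition for $N_1$ with witness $x_1 \in S_1$ and graded ideal $P_1$ of $R_1$. Hence $M_1$ is a graded $S_1$-comultiplication module, and the symmetric choice $N = M_1 \times N_2$ handles $M_2$.

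The step I expect to be the main obstacle is really only bookkeeping rather than a genuine difficulty: justifying that every graded ideal of $R_1 \times R_2$ is a product $P_1 \times P_2$ of graded ideals and that the colon submodule distributes across the product. Both follow from the fact that the idempotents $(1_{R_1}, 0)$ and $(0, 1_{R_2})$ are homogeneous of degree $e$ and hence lie in $h(R)$, so they split $P$ into its two homogeneous pieces; this is the same phenomenon the paper already uses in asserting that graded submodules of $M$ have the form $N_1 \times N_2$. Once these two componentwise identities are verified, extracting the first coordinate from the single inclusion chain is immediate, and no further estimate is needed.
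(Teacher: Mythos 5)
Your proof is correct and follows essentially the same route as the paper: apply the graded $S$-comultiplication hypothesis to a graded submodule of $M$ whose first coordinate is $N_1$, split the resulting ideal $P$ and colon submodule coordinatewise, and read off the first component. The only cosmetic difference is that you test against $N_1\times M_2$ where the paper uses $K_1\times\{0\}$; both choices work, and your explicit justification that $(0:_MP)=(0:_{M_1}P_1)\times(0:_{M_2}P_2)$ fills in a step the paper leaves implicit.
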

\begin{proof}
Assume that $M$ is a graded $S-$comultiplication $R-$module.  Let $K_1$ be a graded submodule of $M_1$.  Then, $K_1\times\{0\}$ is a graded submodule of $M$.  Since $M$ is a graded $S-$comultiplication $R-$module, then there exist $x=(x_1,~x_2)\in~S_1\times~S_2$ and a graded ideal $J=P_1\times~P_2$ of $R$ such that $(x_1,~x_2)(0:_MP_1\times~P_2)\subseteq~K_1\times\{0\}\subseteq(0:_MP_1\times~P_2)$, where $P_i$ is a graded ideal of $R_i$.  Then, we can easily get $x_1(0:_{M_1}P_1)\subseteq~K_1\subseteq(0:_{M_1}P_1)$.  Therefore, $M_1$ is a graded $S_1-$comultiplication $R_1-$modue.  Similarly, taking a graded submodule $K_2$ of $M_2$ and a graded submodule $\{0\}\times~K_2$ of $M$, we can show that $M_2$ is a graded $S_2-$comultiplication $R_2-$modue.
\end{proof}
\begin{thm}
Let $R_i$ be a $G-$graded ring, $M_i$ be a graded $R_i$ module and $S_i\subseteq~h(R_i)$ be a multiplicatively closed subset of $R_i$ for each $i\in\{1,2,...,n\}$. Suppose that $M=M_1\times~M_2\times...\times~M_n$ be a graded $R=R_1\times~R_2\times...\times~R_n-$module and $S=S_1\times~S_2\times...\times~S_n$ be a multiplicatively closed subset of $R$.  If $M$ is a graded $S-$comultiplication $R-$module, then $M_i$ is a graded $S_i-$comultiplication $R_i-$modue for each $i\in\{1,2,...,n\}$.
\end{thm}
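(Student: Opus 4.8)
The plan is to argue by induction on $n$, using the preceding theorem (the case $n=2$) as both the base step and the engine of the inductive step. For $n=1$ the claim is vacuous, and for $n=2$ it is precisely the theorem just established. For the inductive step, assume the result for products of fewer than $n$ factors.

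First I would regroup the product by setting $M' = M_2 \times \cdots \times M_n$, $R' = R_2 \times \cdots \times R_n$, and $S' = S_2 \times \cdots \times S_n$, so that $M = M_1 \times M'$, $R = R_1 \times R'$, and $S = S_1 \times S'$. Before invoking the two-factor case, I must check that this regrouping is compatible with the graded structures: that $R'$ is a $G$-graded ring with $R'_g = \prod_{i=2}^n (R_i)_g$, that $M'$ is a graded $R'$-module with $M'_g = \prod_{i=2}^n (M_i)_g$, and that $S' \subseteq h(R')$ is a multiplicatively closed subset. All of these follow by iterating the two-factor constructions recalled before the preceding theorem, the only point needing attention being the associativity of the homogeneous-component decomposition, i.e. that $(R_1 \times \cdots \times R_n)_g = (R_1)_g \times (R')_g$ and likewise for $M$ and $S$; this is immediate once one unwinds the definitions componentwise.

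Having set this up, applying the two-factor theorem to $M = M_1 \times M'$ over $R = R_1 \times R'$ with $S = S_1 \times S'$ yields at once that $M_1$ is a graded $S_1$-comultiplication $R_1$-module and that $M'$ is a graded $S'$-comultiplication $R'$-module. The induction hypothesis applied to $M' = M_2 \times \cdots \times M_n$ then gives that each $M_i$ (for $2 \le i \le n$) is a graded $S_i$-comultiplication $R_i$-module, completing the induction.

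I expect the main (and only genuine) obstacle to be the bookkeeping of the second paragraph: verifying that collapsing the last $n-1$ coordinates into a single graded factor preserves all the $G$-gradings and that $S'$ remains multiplicatively closed, so that the hypotheses of the two-factor theorem are literally met. Alternatively, one may bypass the induction and argue directly in the spirit of the two-factor proof: fix $i$, take a graded submodule $K_i$ of $M_i$, form the graded submodule $N = \{0\} \times \cdots \times K_i \times \cdots \times \{0\}$ of $M$ (with $K_i$ in the $i$-th slot), and use that $M$ is graded $S$-comultiplication to obtain $x = (x_1,\dots,x_n) \in S$ and a graded ideal $J = P_1 \times \cdots \times P_n$ of $R$ with $x(0:_M J) \subseteq N \subseteq (0:_M J)$; since $(0:_M J) = \prod_{j=1}^n (0:_{M_j} P_j)$ and all operations are componentwise, reading off the $i$-th coordinate gives $x_i(0:_{M_i} P_i) \subseteq K_i \subseteq (0:_{M_i} P_i)$ with $x_i \in S_i$, which is exactly the defining inclusion for $M_i$.
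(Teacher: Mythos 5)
Your proposal is correct and follows essentially the same route as the paper, whose entire proof of this statement is the single line ``Use induction on $n$''; you simply make explicit the regrouping $M = M_1 \times (M_2\times\cdots\times M_n)$ and the compatibility checks that the paper leaves implicit. Your alternative direct argument at the end is also fine and is just the componentwise reasoning of the two-factor proof repeated in the $i$-th slot.
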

\begin{proof}
Use induction on $n$.
\end{proof}
Now we give the following definition.
\begin{defn}
Let $R$ be a $G-$graded ring, $S\subseteq~h(R)$ be a multiplicatively closed subset of $R$ and $M$ be a graded $R-$module.  $M$ is is said to be a graded $S-$cyclic module if there exists $x\in~S$ such that $xM\subseteq~Rm\subseteq~M$ for some $m\in~h(M)$.
\end{defn}
\begin{thm}
Let $R$ be a $G-$graded ring, $S\subseteq~h(R)$ be a multiplicatively closed subset of $R$, $M$ be a graded $S-$comultiplication $R-$module and $N$ be a minimal graded ideal of $R$ such that $(0:_MN)=0$.  Then, $M$ is a graded $S-$cyclic module.
\end{thm}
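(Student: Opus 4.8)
The plan is to reduce the statement to producing a single homogeneous generator, and to observe that everything will follow once I show that \emph{every} nonzero homogeneous element already generates $M$ up to $S$. Concretely, suppose I can find a nonzero homogeneous $m\in M$ with $Ann_R(m)=Ann_R(M)$. Applying the characterization of graded $S-$comultiplication modules in Theorem \ref{1}(ii) to the graded submodule $Rm$ produces an $x\in S$ with $x(0:_MAnn_R(m))\subseteq Rm\subseteq(0:_MAnn_R(m))$; since $Ann_R(M)\cdot M=0$ we have $(0:_MAnn_R(m))=(0:_MAnn_R(M))=M$, whence $xM\subseteq Rm\subseteq M$ and $M$ is graded $S-$cyclic. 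So the entire content of the proof is the identity $Ann_R(m)=Ann_R(M)$ for a suitable (in fact any) nonzero homogeneous $m$.

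First I would exploit minimality to replace $N$ by a principal ideal. Since $N$ is a nonzero graded ideal it contains a nonzero homogeneous element $a$, and $Ra$ is then a nonzero graded ideal contained in $N$; minimality forces $N=Ra$. Consequently the hypothesis $(0:_MN)=0$ reads $(0:_Ma)=0$, i.e. multiplication by $a$ is injective on $M$. From this I get the first inclusion for free: if $r\in(0:_Ra)$ and $m\in M$, then $a(rm)=(ra)m=0$, so $rm\in(0:_Ma)=0$; hence $(0:_Ra)\subseteq Ann_R(M)$. Together with the trivial inclusion $Ann_R(M)\subseteq Ann_R(m)$, valid for every $m$, I will have two of the three inclusions needed to force a collapse.

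The main step, and the place where minimality genuinely does the work, is the upper bound $Ann_R(m)\subseteq(0:_Ra)$ for nonzero homogeneous $m$. Here I would intersect the graded ideals $N$ and $Ann_R(m)$ (both graded, the intersection graded, by the cited Lemma): this intersection is a graded ideal contained in $N$, so by minimality it equals $0$ or $N$. It cannot equal $N$, for $N\subseteq Ann_R(m)$ would give $Nm=0$ and hence $m\in(0:_MN)=0$, contradicting $m\neq 0$; thus $N\cap Ann_R(m)=0$. Now for homogeneous $b\in Ann_R(m)$ the element $ba$ lies in $N$ and, by commutativity, satisfies $(ba)m=a(bm)=0$, so $ba\in N\cap Ann_R(m)=0$, giving $b\in(0:_Ra)$ and therefore $Ann_R(m)\subseteq(0:_Ra)$. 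Chaining the three inclusions $Ann_R(M)\subseteq Ann_R(m)\subseteq(0:_Ra)\subseteq Ann_R(M)$ collapses them to equalities, so $Ann_R(m)=Ann_R(M)$, and the reduction of the first paragraph finishes the argument. The degenerate case $M=0$ is trivially graded $S-$cyclic, so I may assume $M\neq0$ to guarantee that a nonzero homogeneous element exists; the only routine bookkeeping is checking that $a$, $Ann_R(m)$, and $N\cap Ann_R(m)$ are all graded, which is exactly what the Lemma supplies.
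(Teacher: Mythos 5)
Your proof is correct, and it reaches the conclusion by a genuinely different decomposition than the paper's. The paper applies the raw definition of graded $S-$comultiplication to $Rm$ for a nonzero homogeneous $m$, obtaining $x\in S$ and a graded ideal $P$ with $x(0:_MP)\subseteq Rm\subseteq(0:_MP)$, rewrites $(0:_MP)=(0:_MNP)$ using $(0:_MN)=0$, and then applies the minimality of $N$ to the product $NP$: the branch $NP=N$ forces $Rm=0$ (contradiction), and the branch $NP=0$ gives $(0:_MP)=M$, hence $xM\subseteq Rm\subseteq M$. You instead first establish, using only the hypotheses on $N$ and no comultiplication at all, that $Ann_R(m)=Ann_R(M)$ for every nonzero homogeneous $m$ --- applying minimality to the graded ideal $N\cap Ann_R(m)$ rather than to $NP$ --- and only then invoke Theorem \ref{1}(ii) once at the end. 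The underlying dichotomy is the same in both arguments (a graded ideal inside $N$ is $0$ or $N$, and the $N$ branch contradicts $(0:_MN)=0$ together with $m\neq 0$), but your packaging isolates a reusable intermediate fact: under these hypotheses every nonzero homogeneous element has annihilator $Ann_R(M)$, i.e.\ $M$ behaves like a graded prime module on cyclic graded submodules. You also dispose of the degenerate case $M=0$, which the paper's ``choose $0\neq m\in h(M)$'' silently assumes away. One minor remark: the reduction $N=Ra$ is harmless but unnecessary --- the same intersection argument runs verbatim with $N$ itself, replacing $(0:_Ra)$ by $Ann_R(N)$ throughout.
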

\begin{proof}
Chose $0\neq~m\in~h(M)$.  Since $M$ is a graded $S-$comultiplication $R-$module, then there exist $x\in~S$ and a graded ideal $P$ of $R$ such that $x(0:_MP)\subseteq~Rm\subseteq(0:_MP)$.  Since $(0:_MN)=0$, we have $x((0:_MN):_MP)\subseteq~Rm\subseteq((0:_MN):_MP)$.  Then, $x(0:_MNP)\subseteq~Rm\subseteq(0:_MNP)$.  Since $0\subseteq~NP\subseteq~N$ and $N$ is minimal graded ideal of $R$, then either $NP=N$ or $NP=0$.  Case (i): $NP=N$, then $x(0:_MN)\subseteq~Rm\subseteq(0:_MN)$.  This means that $Rm=0$, a contradiction.  Case (ii): $NP=0$, then $x(0:_M0)\subseteq~Rm\subseteq(0:_M0)$.  This means that $xM\subseteq~Rm\subseteq~M$ and hence $M$ is a graded $S-$cyclic.
\end{proof}
\begin{thm}
Let $R$ be a $G-$graded ring, $S\subseteq~h(R)$ be a multiplicatively closed subset of $R$, $M$ be a graded $S-$comultiplication $R-$module and $\{M_i\}_{i\in~I}$ be a collection of graded submodules of $M$ with $\bigcap_iM_i=0$.  Then, for every graded submodule $K$ of $M$, there exists an $x\in~S$ such that $x\bigcap_i(K+M_i)\subseteq~K\subseteq\bigcap_i(K+M_i)$.
\end{thm}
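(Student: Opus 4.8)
The plan is to reduce the statement to the annihilator characterisation of graded $S$-comultiplication modules recorded in Theorem \ref{1}(ii). First I would dispose of the right-hand inclusion $K\subseteq\bigcap_i(K+M_i)$, which is immediate since $K\subseteq K+M_i$ for every $i\in I$ and requires no hypothesis on $M$. The entire content of the theorem therefore lies in producing a single $x\in S$ with $x\bigcap_i(K+M_i)\subseteq K$.

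The key step I would establish is the inclusion $\bigcap_i(K+M_i)\subseteq(0:_MAnn_R(K))$. To prove it, take an arbitrary $m\in\bigcap_i(K+M_i)$ and an arbitrary $r\in Ann_R(K)$; the goal is $rm=0$. For each index $i$ write $m=k_i+n_i$ with $k_i\in K$ and $n_i\in M_i$; since $rK=0$ this yields $rm=rn_i\in M_i$. As this holds for every $i\in I$, we obtain $rm\in\bigcap_iM_i=0$, so $rm=0$. Hence $Ann_R(K)\,m=0$, that is $m\in(0:_MAnn_R(K))$, which is the desired inclusion. One checks along the way that all the objects involved stay graded: each $K+M_i$ is a graded submodule of $M$ by the Lemma, an arbitrary intersection of graded submodules is again graded, and $(0:_MAnn_R(K))$ is a graded submodule by the Remark (since $Ann_R(K)$ is a graded ideal).

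With this inclusion available the proof closes in one line. Because $M$ is a graded $S$-comultiplication module, Theorem \ref{1}(ii) applied to the graded submodule $K$ supplies an $x\in S$ with $x(0:_MAnn_R(K))\subseteq K\subseteq(0:_MAnn_R(K))$. Combining this with the step above gives
$$x\bigcap_i(K+M_i)\subseteq x(0:_MAnn_R(K))\subseteq K\subseteq\bigcap_i(K+M_i),$$
which is precisely the assertion.

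I expect the main obstacle to be identifying the bridging submodule $(0:_MAnn_R(K))$ and verifying $\bigcap_i(K+M_i)\subseteq(0:_MAnn_R(K))$; the computation itself is short, but this is where the hypothesis $\bigcap_iM_i=0$ is genuinely used, and one must remember that the decomposition $m=k_i+n_i$ is permitted to depend on the index $i$. Once that inclusion is in place, the result is a direct substitution into the $S$-comultiplication characterisation, so no further delicacy is required.
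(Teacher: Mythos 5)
Your proposal is correct and follows essentially the same route as the paper: both arguments hinge on Theorem \ref{1}(ii) applied to $K$ together with the inclusion $\bigcap_i(K+M_i)\subseteq(0:_MAnn_R(K))$, which is where the hypothesis $\bigcap_iM_i=0$ enters. The only difference is presentational --- you verify that inclusion element-wise via the decompositions $m=k_i+n_i$, whereas the paper passes through the colon-module identities $(0:_MAnn_R(K))=(\bigcap_iM_i:_MAnn_R(K))=\bigcap_i(M_i:_MAnn_R(K))\supseteq\bigcap_i(K+M_i)$.
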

\begin{proof}
Let $K$ be a graded submodule of $M$.  Since $M$ is a graded $S-$comultiplication module, then $x(0:_MAnn_R(K))\subseteq~K\subseteq(0:_MAnn_R(K))$ for some $x\in~S$.  Thus, $x(\bigcap_iM_i:_MAnn_R(K))\subseteq~K\subseteq(\bigcap_iM_i:_MAnn_R(K))$ since $\bigcap_iM_i=0$.  Thus, $x\bigcap_i(M_i:_MAnn_R(K))\subseteq~K\subseteq\bigcap_i(M_i:_MAnn_R(K))$.  Therefore, $x\bigcap_i(K+M_i)\subseteq~x\bigcap_i(M_i:_MAnn_R(K))\subseteq~K\subseteq\bigcap_i(k+M_i)$
\end{proof}
\begin{thm}
Let $R$ be a $G-$graded ring, $S\subseteq~h(R)$ be a multiplicatively closed subset of $R$ and $M$ be a graded $S-$comultiplication $R-$module.  Then for each graded submodule $N$ of $M$ and each graded ideal $P$ of $R$ with $N\subseteq~x(0:_MP)$ for some $x\in~S$, there exists a graded ideal $J$ of $R$ with $P\subseteq~J$ and $x(0:_MJ)\subseteq~N$
\end{thm}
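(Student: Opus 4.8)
The plan is to exploit the asymmetry of the two requirements on $J$: the condition $P\subseteq J$ bounds $J$ only from below, so I am free to take $J$ as large as I wish. First I would extract the one containment that the hypothesis genuinely forces. Since $x\in h(R)$ and $P(xm)=x(Pm)=0$ for every $m\in(0:_MP)$, we have $x(0:_MP)\subseteq(0:_MP)$, and therefore $N\subseteq x(0:_MP)\subseteq(0:_MP)$. This gives $PN=(0)$, i.e. $P\subseteq Ann_R(N)$, so the minimal natural candidate $J=Ann_R(N)$ already satisfies $P\subseteq J$.

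The main obstacle is that the second inclusion $x(0:_MJ)\subseteq N$ is demanded for the very element $x$ furnished by the hypothesis, not for a scalar of my own choosing. With $J=Ann_R(N)$ this is exactly where a direct argument collapses: the graded $S-$comultiplication property (Theorem \ref{1}(ii)) only produces some $x_0\in S$ with $x_0(0:_MAnn_R(N))\subseteq N$, and the prescribed $x$ need not lie in the set of such scalars. In the Example above ($R=M=\mathbb{Z}$, $S=\mathbb{Z}\setminus\{0\}$), take $N=6\mathbb{Z}$, $P=(0)$ and $x=2$; then $N\subseteq 2(0:_MP)=2\mathbb{Z}$, yet $Ann_R(N)=(0)$ gives $x(0:_MAnn_R(N))=2\mathbb{Z}\not\subseteq 6\mathbb{Z}$. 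Thus $J=Ann_R(N)$ is not a valid choice in general, and it cannot be rescued by manufacturing a fresh scalar.

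I would therefore discard the minimal candidate and enlarge $J$ until $(0:_MJ)$ is forced small enough for the prescribed $x$. The robust choice is $J=R$, which is a graded ideal of $R$ and contains $P$ trivially. Because $1_R\in R_e\subseteq R$, any $m\in(0:_MR)$ satisfies $m=1_Rm=0$, so $(0:_MR)=(0)$ and hence $x(0:_MR)=x\cdot(0)=(0)\subseteq N$ for the prescribed $x$. Both requirements, $P\subseteq J$ and $x(0:_MJ)\subseteq N$, are then met, which proves the statement. The essential content of the result is therefore the single deduction $P\subseteq Ann_R(N)$; the latitude to take $J$ as large as $R$ is precisely what renders the prescribed-scalar inclusion automatic, whereas any attempt to keep $J$ minimal would reintroduce the scalar mismatch identified above.
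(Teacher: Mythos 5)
Your proof is correct for the statement exactly as written, but it is a genuinely different argument from the paper's, and the comparison exposes a real defect in the theorem. The paper's proof takes $J=P+Ann_R(N)$: by Theorem \ref{1}(ii) there is a scalar $x_0\in S$ with $x_0(0:_MAnn_R(N))\subseteq N\subseteq(0:_MAnn_R(N))$, and since $(0:_M(P+Ann_R(N)))=(0:_MP)\cap(0:_MAnn_R(N))$, it concludes $x_0(0:_MJ)\subseteq x_0(0:_MAnn_R(N))\subseteq N$; combined with your first observation ($N\subseteq(0:_MP)$, hence $N\subseteq(0:_MJ)$), this choice of $J$ actually traps $N$ between $x_0(0:_MJ)$ and $(0:_MJ)$, which is plainly the intended content of the result. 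The catch is exactly the scalar mismatch you isolated: the paper reuses the letter $x$ for both the hypothesis scalar and the comultiplication scalar, and under the prescribed-$x$ reading its own choice of $J$ fails --- your $\mathbb{Z}$-example refutes it directly, since there $J=P+Ann_R(N)=(0)$ and $x(0:_MJ)=2\mathbb{Z}\not\subseteq 6\mathbb{Z}$. Your example in fact shows more: no graded ideal $J\supseteq P$ can satisfy both $x(0:_MJ)\subseteq N$ and $N\subseteq(0:_MJ)$ with the prescribed $x=2$, because $(0:_{\mathbb{Z}}J)$ is either $\mathbb{Z}$ or $(0)$; so the contentful version of the theorem is true only when the scalar in the conclusion is allowed to be the comultiplication scalar $x_0$, which is how the paper's proof should be read. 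Your choice $J=R$ escapes this dichotomy only because the statement omits the requirement $N\subseteq(0:_MJ)$ (or properness of $J$); accordingly your proof, while valid, is vacuous --- $(0:_MR)=(0)$ --- and never uses the hypothesis $N\subseteq x(0:_MP)$ nor the assumption that $M$ is a graded $S$-comultiplication module, which is itself a signal that the statement is badly formulated. In short: you satisfied the literal statement (which the paper's proof, read strictly, does not), the paper proved the meaningful statement but for a different scalar than the one it appears to quantify, and a correct formulation should either add $N\subseteq(0:_MJ)$ and existentially quantify the scalar afresh, or be acknowledged as trivial.
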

\begin{proof}
Assume that $N$ is a graded submodule of $M$.  Since, $M$ is a graded $S-$comultiplication module, then we have $x(0:_MAnn_R(N)\subseteq~N\subseteq(0:_MAnn_R(N)$ for some $x\in~S$.  Thus, we have $x(0:_MAnn_R(N)\subseteq~N\subseteq~x(0:_MP)$.  Since $P\subseteq~P+Ann_R(N)$, one can take $J$ to be $P+Ann_R(N)$.  Therefore, $x(0:_MJ)=x(0:_MP+Ann_R(N))\subseteq~x(0:_MP)\bigcap~x(0:_MAnn_R(N))\subseteq~x(0:_MAnn_R(N))\subseteq~N$.
\end{proof}
\begin{thm}
Every graded $S-$comultiplication module is either graded $S-$cyclic or graded torsion.
\end{thm}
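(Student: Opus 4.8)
The plan is to prove the dichotomy in its contrapositive form: assuming that $M$ is \emph{not} graded torsion, I will show that $M$ must be graded $S$-cyclic. The object to produce is a nonzero homogeneous element $m\in h(M)$ whose cyclic submodule $Rm$ is faithful, i.e. $Ann_R(Rm)=(0:_RRm)=0$. Once such an $m$ is in hand, the $S$-comultiplication hypothesis does all the work, mirroring the argument used in the theorem on minimal graded ideals above, where a homogeneous generator was fed into the defining inclusions.

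For the core step I would apply the definition of graded $S$-comultiplication module to the graded submodule $Rm$ (it is graded since $m$ is homogeneous, by Lemma (ii)): there exist $x\in S$ and a graded ideal $P$ of $R$ with $x(0:_MP)\subseteq Rm\subseteq(0:_MP)$. The right-hand inclusion gives $P\cdot Rm=0$, hence $P\subseteq Ann_R(Rm)=0$, so $P=0$ and therefore $(0:_MP)=(0:_M0)=M$. Substituting back yields $xM\subseteq Rm\subseteq M$ with $m\in h(M)$, which is exactly the assertion that $M$ is graded $S$-cyclic. If on the other hand no such homogeneous $m$ exists, then every nonzero homogeneous element is killed by a nonzero homogeneous scalar, and the remaining goal is to deduce $T(M)=M$.

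The main obstacle is precisely the passage between homogeneous and arbitrary elements in the definition of torsion. For homogeneous $m$ the ideal $Ann_R(Rm)$ is graded (Remark (iii)), so it vanishes exactly when no nonzero element of $h(R)$ annihilates $m$, i.e. exactly when $m\notin T(M)$; thus the existence of a faithful homogeneous cyclic submodule is equivalent to the existence of a homogeneous element outside $T(M)$. The delicate point is that the hypothesis ``$M$ is not graded torsion'' a priori only supplies some possibly inhomogeneous $w\notin T(M)$, and one must descend to a homogeneous non-torsion component. Writing $w=\sum_i w_{g_i}$, if each $w_{g_i}$ were torsion, say annihilated by $0\neq r_i\in h(R)$, one would like the product $r=\prod_i r_i$ to annihilate $w$ while remaining nonzero; the nonvanishing of this product, equivalently the fact that $T(M)$ is a graded submodule of $M$, is the step that needs the homogeneous part of $R$ to be well-behaved, and it is here that the argument must either invoke such a property or be read in that setting. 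Carrying out this reduction cleanly is the crux of the proof; everything else is the short faithful-submodule computation of the second paragraph.
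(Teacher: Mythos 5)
Your proposal is essentially the paper's own proof, just phrased in the contrapositive: the paper assumes $M$ is not graded $S$-cyclic, supposes $Ann_R(m)=0$ for some $m$, applies the $S$-comultiplication property (in the form of Theorem 3.6(ii)) to the graded submodule $Rm$ to obtain $x(0:_MAnn_R(m))=xM\subseteq Rm\subseteq M$, derives a contradiction, and concludes that $Ann_R(m)\neq 0$ for every $m$, ``hence $M$ is graded torsion.'' Your faithful-cyclic-submodule computation (deducing $P\subseteq Ann_R(Rm)=0$, hence $(0:_MP)=M$) is the same step in an equivalent form. The homogeneity reduction you single out as the crux is real, but it is exactly the point the paper passes over in silence: the paper writes $m\in M$, yet it needs $m\in h(M)$ both for $Rm$ to be a graded submodule (Lemma 3.4(ii)) and for the conclusion to match the definition of graded $S$-cyclic, and in the other direction it needs each element of $M$ to have a nonzero \emph{homogeneous} annihilator to get $T(M)=M$, which does not follow from $Ann_R(m)\neq 0$ alone unless one restricts to $h(M)$ or knows $T(M)$ is a graded submodule. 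So you have reproduced the paper's argument and, in addition, correctly identified the step it does not justify; neither you nor the paper closes that step, and as you note it requires either reading ``torsion'' homogeneously or imposing that products of nonzero homogeneous elements of $R$ are nonzero.
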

\begin{proof}
Let $R$ be a $G-$graded ring, $S\subseteq~h(R)$ be a multiplicatively closed subset of $R$ and $M$ be a graded $S-$comultiplication $R-$module.  Suppose that $M$ is not graded $S-$cyclic and $Ann_R(m)=0$ for some $m\in~M$.  As $Rm$ is a graded submodule of $M$ and $M$ a graded $S-$comultiplication $R-$module, we get $x(0:_MAnn_R(m))\subseteq~Rm\subseteq(0:_MAnn_R(m))$ for some $x\in~S$.  Thus, $xM\subseteq~Rm\subseteq~M$, a contradiction.  Thus, $Ann_R(m)\neq~0~\forall~m\in~M$ and hence $M$ is graded torsion.
\end{proof}
Now, we introduce the following definition.
\begin{defn}
Let $R$ be a $G-$graded ring, $S\subseteq~h(R)$ be a multiplicatively closed subset of $R$, $M$ be a graded $R-$module and $N$ be a nonzero graded submodule of $M$module.  $M$ is is said to be a graded $S-$minimal submodule if $L\subseteq~N$ for some graded submodule $L$ of $M$, then there exists $x\in~S$ with $xN\subseteq~L$.
\end{defn}
\begin{thm}
Every graded $S-$comultiplication prime module is graded $S-$minimal.
\end{thm}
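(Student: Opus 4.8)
\section{Proof proposal}

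The plan is to unwind the definition of graded $S$-minimal with the ambient module $M$ playing the role of the distinguished submodule $N$, and to combine the defining property of a graded prime module with the characterization of graded $S$-comultiplication modules recorded in Theorem \ref{1}. Concretely, to prove that $M$ is graded $S$-minimal I would fix an arbitrary nonzero graded submodule $L$ of $M$ and aim to produce an element $x\in~S$ with $xM\subseteq~L$.

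First I would invoke the hypothesis that $M$ is graded prime. Since $L$ is a nonzero graded submodule of $M$, the definition of a graded prime module yields $Ann_R(L)=Ann_R(M)$. This is the step that replaces the annihilator of the arbitrary submodule $L$ by the annihilator of the whole module, and it is precisely where primeness enters. Next I would apply the graded $S$-comultiplication hypothesis in the equivalent form supplied by Theorem \ref{1}(ii): there exists $x\in~S$ such that $x(0:_MAnn_R(L))\subseteq~L\subseteq(0:_MAnn_R(L))$. Substituting $Ann_R(L)=Ann_R(M)$ gives $x(0:_MAnn_R(M))\subseteq~L$.

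The pivot is then the identity $(0:_MAnn_R(M))=M$: indeed $Ann_R(M)M=0$ forces $M\subseteq(0:_MAnn_R(M))$, while the reverse inclusion is automatic. Hence the previous inclusion reads $xM\subseteq~L$, and since $L$ was an arbitrary nonzero graded submodule of $M$, the defining condition for graded $S$-minimality (with $N=M$) is satisfied. Therefore $M$ is graded $S$-minimal.

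I do not anticipate a serious obstacle in the computation itself, which is a short chain of inclusions; the argument is essentially a one-line consequence of the two hypotheses once they are correctly aligned. The only point requiring care is the reading of the definition of graded $S$-minimal: one must apply it with $N=M$ and restrict attention to nonzero $L$, so that the graded prime property is available to collapse $Ann_R(L)$ onto $Ann_R(M)$. The equality $(0:_MAnn_R(M))=M$ is the device that converts the $S$-comultiplication inclusion into the desired relation $xM\subseteq~L$.
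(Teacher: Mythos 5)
Your proposal is correct and follows essentially the same route as the paper's proof: fix a nonzero graded submodule $L$, use graded primeness to get $Ann_R(L)=Ann_R(M)$, apply Theorem \ref{1}(ii) to obtain $x(0:_MAnn_R(L))\subseteq L$, and conclude via $(0:_MAnn_R(M))=M$ that $xM\subseteq L$. Your explicit remarks on reading the definition of graded $S$-minimal with $N=M$ and on justifying $(0:_MAnn_R(M))=M$ are points the paper leaves implicit, but the argument is the same.
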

\begin{proof}
Let $R$ be a $G-$graded ring, $S\subseteq~h(R)$ be a multiplicatively closed subset of $R$ and $M$ be a graded $S-$comultiplication prime $R-$module.  Let $L$ be a nonzero graded submodule of $M$.  Since $M$ is graded prime, we have $Ann_R(L)=Ann_R(M)$.  Also, $(0:_MAnn_R(L))=(0:_MAnn_R(M))$.  Since, $M$ is graded $S-$comultiplication module, then $x(0:_MAnn_R(L))\subseteq~L\subseteq(0:_MAnn_R(L))$ for some $x\in~S$.  Thus, we have $x(0:_MAnn_R(M))\subseteq~L\subseteq(0:_MAnn_R(M))$ and hence $xM\subseteq~L\subseteq~M$.  Therefore, $M$ is graded $S-$minimal.
\end{proof}
Now, we need the following definition.
\begin{defn}
Let $R$ be a $G-$graded ring, $S\subseteq~h(R)$ be a multiplicatively closed subset of $R$, $M$ and $\overline{M}$ be two graded $R-$modules and $f:M\longrightarrow\overline{M}$ be a graded $R-$homomorphism.\\
\begin{enumerate}
\item[(i)] If there exists $x\in~S$ such that $f(m)=0$, where $m\in~h(M)$ implies that $xm=0$, then $f$ is called a graded $S-$injective.\\
\item[(ii)] If there exists $x\in~S$ such that $x\overline{M}\subseteq~Im(f)$, then $f$ is called a graded $S-$surjective.\\
\item[(iii)] $f$ is called a graded $S-$zero if there exists $x\in~S$ such that $xf(m)=0~\forall~m\in~h(M)$.
\end{enumerate}
\end{defn}
\begin{thm}\label{3}
Let $R$ be a $G-$graded ring, $S\subseteq~h(R)$ be a multiplicatively closed subset of $R$, $M$ be a graded $R-$module and $N$ be a graded submodule of $M$ with $(N:_RM)\bigcap~S=\phi$.  Then the following assertions are equivalent.\\
\begin{enumerate}
\item[(i)] $N$ is a graded $S-$prime submodule of $M$.\\
\item[(ii)] There exists a fixed $x\in~S$ such that for any $r\in~h(R)$, the homothety $M/N~\underrightarrow{r}~M/N$ ($End(M)$ given by multiplication of $r$) is either graded $S-$zero or graded $S-$injective with respect to $x\in~S$.
\end{enumerate}
\end{thm}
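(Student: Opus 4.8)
The plan is to translate the two alternatives in (ii)---that the homothety $\mu_r\colon M/N\to M/N$ given by multiplication by $r$ is graded $S$-zero or graded $S$-injective---into membership statements about $N$, and then match these against the defining clause of a graded $S$-prime submodule. First I would record the two translations. For a fixed $x\in S$ and $r\in h(R)$: the map $\mu_r$ is graded $S$-zero with respect to $x$ precisely when $x\,\mu_r(\overline{m})=\overline{xrm}=0$ in $M/N$ for every $m\in h(M)$, i.e. $xrm\in N$ for all homogeneous $m$; since $M$ is generated by its homogeneous elements this is equivalent to $xrM\subseteq N$, that is $xr\in(N:_RM)$. Likewise, $\mu_r$ is graded $S$-injective with respect to $x$ precisely when $\mu_r(\overline{m})=\overline{rm}=0$ forces $x\overline{m}=0$, i.e. $rm\in N$ implies $xm\in N$, for homogeneous $m$. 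With these two equivalences in hand the theorem becomes almost formal.

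For $(i)\Rightarrow(ii)$ I would take the fixed $x\in S$ supplied by the graded $S$-prime hypothesis and show the \emph{same} $x$ witnesses (ii). Fix $r\in h(R)$ and split on whether $xr\in(N:_RM)$. If $xr\in(N:_RM)$, the first translation shows $\mu_r$ is graded $S$-zero with respect to $x$. If $xr\notin(N:_RM)$, then for any homogeneous $m$ with $rm\in N$ the graded $S$-prime clause yields $xr\in(N:_RM)$ or $xm\in N$; the first alternative is excluded, so $xm\in N$, and the second translation shows $\mu_r$ is graded $S$-injective with respect to $x$. Thus for every $r$ one of the two cases holds with the common $x$, which is exactly (ii).

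For $(ii)\Rightarrow(i)$ I would run the argument in reverse. Let $x\in S$ be the fixed element from (ii); I claim $N$ is graded $S$-prime with this very $x$ (recall $(N:_RM)\cap S=\phi$ is assumed). Suppose $rm\in N$ with $r\in h(R)$ and $m\in h(M)$, so $\mu_r(\overline m)=0$. By (ii) the homothety $\mu_r$ is graded $S$-zero or graded $S$-injective with respect to $x$. In the first case the translation gives $xr\in(N:_RM)$; in the second case $\mu_r(\overline m)=0$ forces $xm\in N$. Either way $xr\in(N:_RM)$ or $xm\in N$, which is the graded $S$-prime condition.

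The only genuinely delicate points are bookkeeping rather than substance. The main one is verifying that a single fixed $x$ can be used uniformly across all $r\in h(R)$ in both directions---this is why I carry the same $x$ throughout and split into the two cases by the single test $xr\in(N:_RM)$. A secondary point is the passage from ``$xrm\in N$ for all homogeneous $m$'' to ``$xr\in(N:_RM)$'', which relies on $M=\bigoplus_{g}M_g$ being generated by $h(M)$. I would also remark that when $\deg r\neq e$ the homothety $\mu_r$ is degree-shifting rather than degree-preserving, but the definitions of graded $S$-zero and graded $S$-injective only constrain the images of homogeneous elements, so this causes no difficulty.
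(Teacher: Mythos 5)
Your proposal is correct and follows essentially the same route as the paper's proof: both directions rest on the same translations of graded $S$-zero and graded $S$-injective into ``$xr\in(N:_RM)$'' and ``$rm\in N\Rightarrow xm\in N$'', with the same fixed $x\in S$ carried throughout. The only cosmetic difference is that you organize $(i)\Rightarrow(ii)$ by splitting on whether $xr\in(N:_RM)$, whereas the paper argues contrapositively (not $S$-injective implies $S$-zero); the content is identical.
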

\begin{proof}
$(i)\Longrightarrow(ii)$ Assume that $N$ is a graded $S-$prime submodule of $M$.  Then, there exists a fixed $x\in~S$ such that $rm\in~N$ for some $r\in~h(R)$, $m\in~h(M)$ implies that $xrM\subseteq~N$ or $xm\subseteq~N$.  Now, take $r\in~R$ and assume that the homothety $M/N~\underrightarrow{r}~M/N$ is not graded $S-$injective with respect to $x\in~S$.  Thus, there exists $m\in~h(M)$ with $r(m+N)=rm+N=0_{M/N}$, but $x(m+N)\neq~0_{M/N}$.  Thus, $rm\in~N$ and $xm\not\in~N$.  Now, since $N$ is a graded $S-$prime submodule, then $xr\in(N:_RM)$ and thus, $xrt\in~N$ for some $t\in~h(M)$.  Thus, we have $xr(t+N)=0_{M/N}$ for each $t\in~h(M)$, that is, the homothety $M/N~\underrightarrow{r}~M/N$ is graded $S-$zero with respect to $x\in~S$.\\
$(ii)\Longrightarrow(i)$ Assume that $(ii)$ holds, let $rm\in~N$ for some $r\in~h(R)$ and $m\in~h(M)$.  Suppose that $xm\not\in~N$.  Then, $M/N~\underrightarrow{r}~M/N$ is not graded $S-$injectiv.  Thus, by $(ii)$, $M/N~\underrightarrow{r}~M/N$ is graded $S-$zero with respect to $x\in~S$, namely, $xr(t+N)=0_{M/N}$ for each $t\in~h(M)$.  Thus, $xr\in(N:_RM)$.  Therefore, $N$ is a graded $S-$prime submodule of $M$.
\end{proof}
\begin{rem}
Take $S\subseteq~U(R)$ in Theorem \ref{3}, one can easily see that a graded submodule $N$ of $M$ is a graded prime submodule if and only if every homothety $M/N~\underrightarrow{r}~M/N$ is either graded injective or graded zero.
\end{rem}
Now, we need the following definition.
\begin{defn}
Let $R$ be a $G-$graded ring, $S\subseteq~h(R)$ be a multiplicatively closed subset of $R$ and $M$ be a graded $R-$module.  A nonzero graded submodule $N$ of $M$ with $Ann_R(N)\bigcap~S=\phi$ is said to be a graded $S-$second submodule if there exists $x\in~S$ with $xrN=0$ or $xrN=xN$ for each $r\in~h(R)$.
\end{defn}
\begin{thm}\label{4}
Let $R$ be a $G-$graded ring, $S\subseteq~h(R)$ be a multiplicatively closed subset of $R$, $M$ be a graded $R-$module and $N$ be a graded submodule of $M$ with $Ann_R(N)\bigcap~S=\phi$.  Then the following assertions are equivalent.\\
\begin{enumerate}
\item[(i)] $N$ is a graded $S-$second submodule of $M$.\\
\item[(ii)] There exists $x\in~S$ such that for each $r\in~h(R)$, the homothety $M/N~\underrightarrow{r}~M/N$ ($End(M)$ given by multiplication of $r$) is either graded $S-$zero or graded $S-$surjective with respect to $x\in~S$.\\
\item[(iii)] There exists a fixed $x\in~S$ so that for each $x\in~h(R)$, either $xrN=0$ or $xN=rN$.
\end{enumerate}
\end{thm}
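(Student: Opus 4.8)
The plan is to establish the cycle $(i)\Rightarrow(ii)\Rightarrow(iii)\Rightarrow(i)$. Since condition $(iii)$ is the defining property of a graded $S-$second submodule spelled out with a fixed witness $x$, the implication $(iii)\Rightarrow(i)$ is immediate, so the work lies in the two implications involving the homothety characterisation $(ii)$. The first step is to unwind $(ii)$ using the definitions of graded $S-$zero and graded $S-$surjective maps applied to the homothety $N\underrightarrow{r}N$ given by multiplication by $r$ on $N$. For a fixed $x\in~S$ this homothety is graded $S-$zero exactly when $xrN=0$, and, since its image is $rN$, it is graded $S-$surjective exactly when $xN\subseteq~rN$. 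Thus $(ii)$ asserts the existence of a single $x\in~S$ such that for every $r\in~h(R)$ either $xrN=0$ or $xN\subseteq~rN$.

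For $(i)\Rightarrow(ii)$ I would argue directly from the definition. Let $x\in~S$ be the witness making $N$ graded $S-$second, so that for each $r\in~h(R)$ we have $xrN=0$ or $xrN=xN$. In the former case the homothety is graded $S-$zero with respect to $x$. In the latter case, the inclusion $rN\subseteq~N$ gives $xrN=x(rN)\subseteq~rN$, whence $xN=xrN\subseteq~rN$ and the homothety is graded $S-$surjective with respect to the same $x$. This yields $(ii)$.

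The main obstacle is the converse $(ii)\Rightarrow(iii)$, because graded $S-$surjectivity only supplies the single inclusion $xN\subseteq~rN$, whereas the defining equality $xrN=xN$ is stronger. Here I would exploit the hypothesis $Ann_R(N)\cap~S=\phi$ through the substitution of $xr$ for $r$. Fix the witness $x$ from $(ii)$ and take $r\in~h(R)$; if $xrN=0$ we already have the zero alternative of $(iii)$. Otherwise $xN\subseteq~rN$, and applying $(ii)$ to the homogeneous element $xr$ gives either $x(xr)N=0$ or $xN\subseteq~xrN$. The first alternative cannot occur: multiplying $xN\subseteq~rN$ by $x$ gives $x^2N\subseteq~xrN$, so $x^3N\subseteq~x(xrN)=x^2rN=0$, forcing $x^3N=0$ and contradicting $x^3\in~S$ together with $Ann_R(N)\cap~S=\phi$. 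Hence $xN\subseteq~xrN$; combined with the trivial inclusion $xrN=x(rN)\subseteq~xN$ this gives $xrN=xN$, which is exactly the defining alternative in $(iii)$ with witness $x$, closing the cycle. The decisive point throughout is that $Ann_R(N)\cap~S=\phi$ guarantees $x^{k}N\neq0$ for every $k$, which is precisely what rules out the degenerate $S-$zero case after the substitution and upgrades the mere inclusion into the required equality.
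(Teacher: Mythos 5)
Your argument is correct, and it supplies strictly more than the paper does: the paper's entire proof of this theorem is the single line ``Proceed similar to Theorem \ref{3}'', and Theorem \ref{3} is only a two-way equivalence, so the genuinely new content here --- recovering the equality $xrN=xN$ of the definition from the one-sided inclusion $xN\subseteq rN$ furnished by graded $S-$surjectivity --- is nowhere written down in the paper. Your $(ii)\Rightarrow(iii)$ step, applying $(ii)$ to the homogeneous element $xr$ and using $x^{3}\in S$ together with $Ann_R(N)\cap S=\emptyset$ to rule out $x^{2}rN=0$, is exactly the missing ingredient and is sound. Two caveats concern the statement you are proving rather than your reasoning. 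First, you silently replace the homothety $M/N~\underrightarrow{r}~M/N$ of the printed statement by the homothety $N~\underrightarrow{r}~N$; this is the right reading (the $M/N$ version is carried over verbatim from Theorem \ref{3} and does not fit second submodules, whose defining dichotomy lives on $N$), but it should be stated as a correction rather than assumed. Second, item $(iii)$ as printed reads ``$xrN=0$ or $xN=rN$'', whereas what you actually prove is ``$xrN=0$ or $xrN=xN$'' (equivalently ``$xN\subseteq rN$''); the printed version is genuinely false as an equivalent of $(i)$ --- take $r=1$ and any admissible witness $x$ with $xN\neq N$, e.g.\ $N=M=\mathbb{Z}/6\mathbb{Z}$ over trivially graded $\mathbb{Z}$ with $S$ the powers of $2$ --- so your reading is the correct repair, but the discrepancy deserves to be flagged explicitly instead of being absorbed into the phrase ``exactly the defining alternative in $(iii)$''.
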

\begin{proof}
Proceed similar to Theorem \ref{3}
\end{proof}
Recall that a graded submodule $N$ of a graded $R-$module $M$ is said to be completely irreducible if $N$ is not the intersection of any graded submodules of $M$ that properly contain it.
\begin{thm}\label{5}
Let $R$ be a $G-$graded ring, $S\subseteq~h(R)$ be a multiplicatively closed subset of $R$, $M$ be a graded $S-$comultiplication $R-$module and $N$ be a graded submodule of $M$ with $Ann_R(N)\bigcap~S=\phi$.  Then the following assertions are equivalent.\\
\begin{enumerate}
\item[(i)] $N$ is a graded $S-$second submodule of $M$.\\
\item[(ii)] $Ann_R(N)$ is a graded $S-$prime ideal of $R$ and there exists $x\in~S$ such that $xN\subseteq\overline{x}N$ for every $\overline{x}\in~S$
\end{enumerate}
\end{thm}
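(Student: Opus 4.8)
The plan is to prove the two implications separately, the forward one being essentially bookkeeping with the definitions and the converse relying on Theorem \ref{1}(ii), the annihilator characterisation of graded $S$-comultiplication modules. Throughout I note that $Ann_R(N)\cap S=\emptyset$ already forces $N\neq 0$ (otherwise $1\in Ann_R(N)\cap S$), so the nonzero hypothesis hidden in the definition of a graded $S$-second submodule is automatic.

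For $(i)\Rightarrow(ii)$ I would fix the single witness $x\in S$ from the definition of a graded $S$-second submodule, so that $xrN=0$ or $xrN=xN$ for every $r\in h(R)$, and show this same $x$ serves both clauses of (ii). To see that $Ann_R(N)$ is graded $S$-prime, take homogeneous $a,b$ with $ab\in Ann_R(N)$, i.e. $abN=0$, and apply the condition to $r=b$: either $xbN=0$, giving $xb\in Ann_R(N)$, or $xbN=xN$, in which case $x(aN)=a(xN)=a(xbN)=xabN=0$, giving $xa\in Ann_R(N)$; since $(Ann_R(N):_RR)=Ann_R(N)$ is disjoint from $S$ by hypothesis, this is exactly graded $S$-primeness. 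For the stability clause, apply the condition to $r=\overline{x}\in S\subseteq h(R)$: the alternative $x\overline{x}N=0$ is impossible, since $x\overline{x}\in S$ would then lie in $Ann_R(N)\cap S=\emptyset$, so $x\overline{x}N=xN$ and hence $xN=\overline{x}(xN)\subseteq\overline{x}N$.

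For the converse $(ii)\Rightarrow(i)$, let $x_1$ be the fixed witness of graded $S$-primeness of $Ann_R(N)$ and $x_2$ the witness of the stability condition, and I claim $x:=x_1x_2\in S$ witnesses graded $S$-secondness. Given $r\in h(R)$, I would dichotomise on whether $x_1r\in Ann_R(N)$. If it is, then $xrN=x_2(x_1rN)=0$. Otherwise, for every homogeneous $s\in Ann_R(rN)$ one has $sr\in Ann_R(N)$, so graded $S$-primeness (with $x_1r\notin Ann_R(N)$) forces $x_1s\in Ann_R(N)$; hence $x_1\,Ann_R(rN)\subseteq Ann_R(N)$, and consequently $x_1N\subseteq(0:_M Ann_R(rN))$, because $Ann_R(rN)\cdot(x_1N)=x_1\,Ann_R(rN)\cdot N\subseteq Ann_R(N)\cdot N=0$. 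Now I invoke Theorem \ref{1}(ii) for the graded submodule $rN$ to obtain $x_3\in S$ with $x_3(0:_M Ann_R(rN))\subseteq rN$, which yields $x_3x_1N\subseteq rN$.

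The hard part is precisely here: $x_3$ depends on $r$, while the definition demands a single $x$ working for all $r$, and it demands the literal equality $xrN=xN$ rather than a mere containment. Both obstacles are dissolved by the stability hypothesis, which is exactly why clause (ii) is phrased as it is. First, applying $x_2N\subseteq\overline{x}N$ with $\overline{x}=x_3x_1\in S$ absorbs the $r$-dependent factor and gives the uniform containment $x_2N\subseteq rN$. Second, taking $\overline{x}=x_2^2$ gives $x_2N\subseteq x_2^2N$, while $x_2^2N\subseteq x_2N$ is automatic, so $x_2^2N=x_2N$. Multiplying $x_2N\subseteq rN$ by $x=x_1x_2$ and using this idempotent-type identity promotes the containment to $xN=x_1x_2^2N\subseteq x_1x_2rN=xrN$; combined with $xrN\subseteq xN$ (from $rN\subseteq N$) this gives $xrN=xN$. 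Thus the single element $x=x_1x_2$ realises $xrN=0$ or $xrN=xN$ for every $r\in h(R)$, so $N$ is graded $S$-second, completing the proof.
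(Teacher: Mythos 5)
Your proof is correct, and while its skeleton matches the paper's (case analysis on $xrN=0$ versus $xrN=xN$ for the forward direction; applying the comultiplication property to $rN$ and then using the stability clause for the converse), the details differ in ways worth noting. For the stability clause in $(i)\Rightarrow(ii)$ the paper argues through completely irreducible graded submodules (writing $\overline{x}N$ as an intersection of such $K$ and showing $xN\subseteq K$ for each), whereas you simply apply the $S$-second condition to $r=\overline{x}$ and rule out $x\overline{x}N=0$ via $x\overline{x}\in S$; your route is shorter and avoids the auxiliary machinery. For $(ii)\Rightarrow(i)$ the paper imports Lemma 4.2 and Proposition 4.3 of the cited reference to produce an $x$ with $Ann_R(xN)$ graded prime, and it is rather loose about the fact that the comultiplication witness $\overline{x}$ and the primeness witness depend on $r$ while the definition of graded $S$-second demands one uniform element; it also concludes only a containment $xN\subseteq rN$ and defers to Theorem 4.4(iii). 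You instead work directly from the definition of graded $S$-primeness to get $x_1\,Ann_R(rN)\subseteq Ann_R(N)$, invoke Theorem 3.6(ii) for $rN$, and then use the stability hypothesis twice --- once with $\overline{x}=x_3x_1$ to absorb the $r$-dependent factor and once with $\overline{x}=x_2^2$ to get $x_2^2N=x_2N$ --- so that the single element $x=x_1x_2$ literally satisfies $xrN=0$ or $xrN=xN$. This makes explicit exactly the uniformity that the paper's write-up glosses over, and it is the cleaner of the two arguments.
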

\begin{proof}
$(i)\Longrightarrow(ii)$ Assume that $N$ is a graded $S-$second submodule of $M$.  Let $r\overline{r}\in~Ann_R(N)$ for some $r,~\overline{r}\in~h(R)$.  Since $N$ is a graded $S-$second submodule of $M$, then there exists $x\in~S$ such that $rxN=xN$ or $rxN=0$ and $\overline{r}xN=xN$ or $\overline{r}xN=0$.  If $rxN=0$ or $\overline{r}xN=0$, then $Ann_R(N)$ is a graded $S-$prime ideal of $R$.  If $rxN=xN$, then $0=\overline{r}rxN=\overline{r}xN$, a contradiction.  If $\overline{r}xN=xN$, then $0=r\overline{r}xN=rxN$, a contradiction.  Thus, in any case, $rxN=0$ or $\overline{r}xN=0$, and therefore, $Ann_R(N)$ is a graded $S-$prime ideal of $R$.  Again, $N$ is a graded $S-$second submodule of $M$ implies that there exists $x\in~S$ such that $rN\subseteq~L$ for each $r\in~h(R)$ and a submodule $L$ of $M$.  Thus, $xN\subseteq~L$ or $xrN=0$.  Let $K$ be a completely irreducible graded submodule of $M$ with $\overline{x}N\subseteq~K$.  Then, $xN\subseteq~K$ or $\overline{x}xN=0$.  Since $Ann_R(N)\bigcap~S=\phi$, we have $xN\subseteq~K$.  Therefore, $xN\subseteq\overline{x}N$.\\
 $(ii)\Longrightarrow(i)$ Assume that $Ann_R(N)$ is a graded $S-$prime ideal of $R$.  We need to show that $N$ is a graded $S-$second submodule of $M$.  Let $r\in~h(R)$.  Since $Ann_R(N)$ is a graded $S-$prime ideal of $R$, by (\cite{x}, Lemma 4.2, Proposition 4.3), there exists $x\in~S$ such that $Ann_R(xN)$ is a graded prime ideal and $Ann_R(\overline{x}N)\subseteq~Ann_R(xN)$ for every $\overline{x}\in~S$.  Assume that $xrN\neq(0)$.  Now, we need to show that $xN\subseteq~rN$.  Since $M$ is a graded $S-$comultiplication module, then there exist $\overline{x}\in~S$ and a graded ideal $P$ of $R$ with $\overline{x}(0:_MP)\subseteq~rN\subseteq(0:_MP)$.  Thus, $rP\subseteq~Ann_R(N)$.  Since $Ann_R(N)$ is a graded $S-$prime ideal of $R$, then there exists $x\in~S$ such that $xr\in~Ann_R(N)$ or $xP\subseteq~Ann_R(N)$ by (\cite{x}, Lemma 4.2, Proposition 4.3).  The first case is impossible since $xrN\neq(0)$.  Thus, we have $P\subseteq~Ann_R(xN)$.  Thus, we have $\overline{x}x(0:_MAnn_R(xN)\subseteq\overline{x}(0:_MP)\subseteq~rN$.  Thus, $\overline{x}x^2N\subseteq\overline{x}x(0:_MAnn_R(xN)\subseteq~xN$.  Then, by $(ii)$, $xN\subseteq\overline{x}x^2N\subseteq~rN$.  Therefore, by Theorem \ref{4}, $N$ is a graded $S-$second submodule of $M$.
\end{proof}
\begin{thm}
Let $R$ be a $G-$graded ring, $S\subseteq~h(R)$ be a multiplicatively closed subset of $R$, $M$ be a graded comultiplication $R-$module and $N$ be a graded submodule of $M$ with $Ann_R(N)\bigcap~S=\phi$.  Then the following assertions are equivalent.\\
\begin{enumerate}
\item[(i)] $N$ is a graded second submodule of $M$.\\
\item[(ii)] $Ann_R(N)$ is a graded prime ideal of $R$.
\end{enumerate}
\end{thm}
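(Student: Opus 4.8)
The multiplicatively closed set $S$ plays no essential role here beyond forcing $N\neq 0$: since $1\in S$ and $Ann_R(N)\cap S=\phi$, we have $1\notin Ann_R(N)$, so $Ann_R(N)\neq R$ and $N\neq 0$. As neither (i) nor (ii) mentions $S$, I would prove the equivalence directly. In fact the statement is the ``unit case'' of Theorem \ref{5}: applying Theorem \ref{5} to the trivial multiplicatively closed subset $S_0=\{1_R\}\subseteq U(R)$, a graded $\{1_R\}$-comultiplication module is (by Definition with $x=1_R$) exactly a graded comultiplication module, a graded $\{1_R\}$-second submodule is exactly a graded second submodule, a graded $\{1_R\}$-prime ideal is exactly a graded prime ideal, and the auxiliary clause ``$xN\subseteq\overline{x}N$ for every $\overline{x}\in S_0$'' collapses to the trivial $N\subseteq N$. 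Since $Ann_R(N)\cap\{1_R\}=\phi$ follows from $N\neq 0$, Theorem \ref{5} applies and yields the claim at once. I also sketch a self-contained argument below.

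For $(i)\Longrightarrow(ii)$ the comultiplication hypothesis is not needed. As $N$ is graded second it is nonzero, so $Ann_R(N)\neq R$. Take $a,b\in h(R)$ with $ab\in Ann_R(N)$, i.e. $abN=0$. Since $N$ is graded second and $a\in h(R)$, either $aN=0$, giving $a\in Ann_R(N)$, or $aN=N$, in which case $bN=b(aN)=(ab)N=0$, giving $b\in Ann_R(N)$. Hence $Ann_R(N)$ is a graded prime ideal.

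For $(ii)\Longrightarrow(i)$ I would first record the standard consequence of the comultiplication hypothesis: for every graded submodule $K$ of $M$ one has $K=(0:_M Ann_R(K))$. Indeed, writing $K=(0:_M I)$ for a graded ideal $I$ gives $IK=0$, so $I\subseteq Ann_R(K)$ and therefore $(0:_M Ann_R(K))\subseteq(0:_M I)=K$, the reverse inclusion being automatic. Now fix $r\in h(R)$ with $rN\neq 0$; the goal is $rN=N$. The product $rN$ is again a graded submodule of $M$, so the displayed identity applies to both $N$ and $rN$. The crux is to prove $Ann_R(rN)=Ann_R(N)$. The inclusion $Ann_R(N)\subseteq Ann_R(rN)$ is clear. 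For the converse, take a homogeneous $a\in Ann_R(rN)$; then $(ar)N=0$, so $ar\in Ann_R(N)$, and since $Ann_R(N)$ is graded prime with $a,r\in h(R)$, either $a\in Ann_R(N)$ or $r\in Ann_R(N)$. The latter is excluded by $rN\neq 0$, so $a\in Ann_R(N)$; decomposing an arbitrary element of the graded ideal $Ann_R(rN)$ into homogeneous components then gives $Ann_R(rN)\subseteq Ann_R(N)$. Consequently $rN=(0:_M Ann_R(rN))=(0:_M Ann_R(N))=N$, so $N$ is a graded second submodule.

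The main obstacle is the interplay in $(ii)\Longrightarrow(i)$: one must first convert the primeness of $Ann_R(N)$ into the annihilator equality $Ann_R(rN)=Ann_R(N)$, and then invoke comultiplication, through the identity $K=(0:_M Ann_R(K))$, to upgrade this equality of ideals into the equality $rN=N$ of submodules. Everything else is formal; the only point requiring care is to keep all manipulations within homogeneous elements so that the definition of a graded prime ideal may legitimately be applied.
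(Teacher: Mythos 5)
Your proposal is correct, and its first half is essentially the paper's own proof: the paper simply observes that for $S\subseteq U(R)$ the graded $S$-notions collapse to the ungraded-by-$S$ notions and cites Theorem \ref{5}. Your version of that reduction is in fact slightly more careful than the paper's, since the theorem fixes an arbitrary $S$ with $Ann_R(N)\cap S=\phi$ while the conclusion never mentions $S$; specializing to $S_0=\{1_R\}$ (rather than ``taking'' $S$ inside $U(R)$, as the paper loosely puts it) and checking that $Ann_R(N)\cap S_0=\phi$ follows from $N\neq 0$ is the right way to make the citation legitimate. Your second, self-contained argument is a genuinely different route that the paper does not give: the forward direction is the standard ``second implies prime annihilator'' computation needing no comultiplication hypothesis at all, and the reverse direction replaces the machinery of Theorem \ref{5} (whose proof leans on external results from the reference on graded $S$-prime submodules) by the clean identity $K=(0:_M Ann_R(K))$ valid in any graded comultiplication module, combined with the annihilator equality $Ann_R(rN)=Ann_R(N)$ extracted from graded primeness. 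Both steps are correct as written -- $rN$ is a graded submodule by the paper's Lemma 3.4, $Ann_R(rN)$ is a graded ideal so the reduction to homogeneous elements is justified -- and this direct proof is arguably preferable, since it is elementary, avoids the saturation/fixed-$x$ bookkeeping of the $S$-graded theory, and makes transparent exactly where the comultiplication hypothesis enters.
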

\begin{proof}
If we take $S\subseteq~U(R)$, then the concepts of graded $S-$comultiplication modules and graded comultiplication modules are the same.  On the other hand, the concepts of graded second submodules and graded $S-$second submodules are the same.  Therefore, the rest follows from Theorem \ref{5}
\end{proof}
\begin{thm}
Let $R$ be a $G-$graded ring, $S\subseteq~h(R)$ be a multiplicatively closed subset of $R$, $M$ be a graded $S-$comultiplication $R-$module and $N$ be a graded $S-$second submodule of $M$.  If $N\subseteq\sum_{i=1}^nN_i$ for some graded submodules $N_1,N_2,...,N_n$ of $M$, then there exists $x\in~S$ such that $xN\subseteq~N_i$ for some $i\in\{1,2,...,n\}$.
\end{thm}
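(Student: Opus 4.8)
The plan is to dualize the hypothesis $N\subseteq\sum_{i=1}^n N_i$ into an inclusion of annihilator ideals, then use that $Ann_R(N)$ is a graded $S-$prime ideal --- which is available from Theorem \ref{5}, since $N$ is graded $S-$second and $M$ is graded $S-$comultiplication --- to isolate a single index $i$, and finally pull the resulting ideal inclusion back to a module inclusion through the order-reversing operator $(0:_M-)$.

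First I would observe that $\prod_{i=1}^n Ann_R(N_i)\subseteq Ann_R(N)$. Indeed, for $a_k\in Ann_R(N_k)$ and any $m=\sum_i m_i\in\sum_i N_i$ with $m_i\in N_i$, the product $a_1\cdots a_n$ kills each $m_i$ (as $a_im_i=0$), hence kills $m$; since $N\subseteq\sum_i N_i$, the product annihilates $N$. Each $Ann_R(N_i)$ is a graded ideal by the Remark, and products of graded ideals are graded by the Lemma, so every object here stays homogeneous.

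The principal obstacle is an $S-$prime version of prime avoidance for products. Writing $P=Ann_R(N)$ and letting $x\in S$ be the fixed witness of its graded $S-$primeness, I would first establish the two-factor lemma: if $IJ\subseteq P$ for graded ideals $I,J$, then $xI\subseteq P$ or $xJ\subseteq P$. To see this, assume $xI\not\subseteq P$ and pick a homogeneous $a\in I$ with $xa\notin P$; for each homogeneous $b\in J$ we have $ab\in P$, so the $S-$prime property forces $xa\in P$ or $xb\in P$, whence $xb\in P$, and thus $xJ\subseteq P$. Iterating the lemma on $I_1(I_2\cdots I_n)\subseteq P$, and at each stage regrouping the carried factor as $(xI_k)(I_{k+1}\cdots I_n)$, a power of $x$ is absorbed; since $S$ is multiplicatively closed and $P$ is an ideal, this produces a single index $i$ and an element $w=x^{k}\in S$ with $w\,Ann_R(N_i)\subseteq P$. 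The only delicate point here is bookkeeping the accumulating powers of $x$ while keeping everything homogeneous.

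Finally, I would convert this ideal inclusion into the desired module inclusion. Applying $(0:_M-)$ reverses inclusions, giving $(0:_M P)\subseteq(0:_M w\,Ann_R(N_i))=\{m\in M: wm\in(0:_M Ann_R(N_i))\}$. Because $N\subseteq(0:_M Ann_R(N))=(0:_M P)$ holds unconditionally, it follows that $wN\subseteq(0:_M Ann_R(N_i))$. Now Theorem \ref{1}(ii), applied to the submodule $N_i$ of the graded $S-$comultiplication module $M$, supplies $y_i\in S$ with $y_i(0:_M Ann_R(N_i))\subseteq N_i$. Hence $y_iw\,N\subseteq N_i$, and setting $x^{\ast}=y_iw\in S$ yields $x^{\ast}N\subseteq N_i$, as required.
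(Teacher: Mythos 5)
Your proof is correct, and its overall skeleton matches the paper's: pass to annihilators, invoke Theorem \ref{5} to get that $Ann_R(N)$ is a graded $S-$prime ideal, perform an $S-$prime avoidance to isolate one index $i$, and return to submodules via the graded $S-$comultiplication property. The one genuine difference is in the avoidance step. The paper uses the intersection $\bigcap_{i=1}^n Ann_R(N_i)=Ann_R\bigl(\sum_{i=1}^n N_i\bigr)\subseteq Ann_R(N)$ and then cites an external result (Corollary 2.5 of the reference on graded $S-$prime submodules) to extract $x\in S$ with $x\,Ann_R(N_i)\subseteq Ann_R(N)$ for some $i$; you instead use the product $\prod_{i=1}^n Ann_R(N_i)\subseteq Ann_R(N)$ and prove the needed two-factor lemma from the definition (if $IJ\subseteq P$ for graded ideals then $xI\subseteq P$ or $xJ\subseteq P$, where $x$ is the fixed witness), iterating it with careful bookkeeping of the accumulated powers of $x$. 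Your route is self-contained and costs only the mild overhead of tracking $x^k\in S$, which is harmless since $S$ is multiplicatively closed; the paper's route is shorter on the page but leans on an imported corollary. Your final conversion back to modules via $(0:_M-)$ and Theorem \ref{1}(ii) is just an unpacked version of the paper's appeal to Theorem \ref{1}(iii) (whose statement in the paper contains the typo $xN\subseteq N$ for the intended $xN\subseteq L$), applied to $Ann_R(N_i)\subseteq Ann_R(wN)$. Both arguments are sound.
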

\begin{proof}
Assume that $N$ is a graded $S-$second submodule of a graded $S-$comultiplication module $M$ such that $N\subseteq\sum_{i=1}^nN_i$ for some graded submodules $N_1,N_2,...,N_n$ of $M$.  Then, we have $Ann_R(\sum_{i=1}^nN_i)=\bigcap_{i=1}^nAnn_R(N_i)\subseteq~Ann_R(N)$.  Since $N$ is a graded $S-$second submodule of $M$, we have $Ann_R(N)$ is a graded $S-$prime ideal of $R$ by Theorem \ref{5}.  Then, by (\cite{x}, Corollary 2.5), there exists $x\in~S$ such that $x~Ann_R(N_i)\subseteq~Ann_R(N)$ for some $i\in\{1,2,...,n\}$.  Thus, $Ann_R(N_i)\subseteq~Ann_R(xN)$.  Then by Theorem \ref{1} $(iii)$, $xmN\subseteq~N_i$ for some $m\in~S$.  Therefore, we are done.
\end{proof}

\section{Conclusion}
Here, we represented a new form of the graded theory.  We discussed and proved new theorems in this area.  We investigated the relations between graded $S-$comultiplication modules and graded $S-$cyclic modules. Also, we dedicated the study to graded $S-$second modules of graded $S-$comultiplication modules.  We can generalize the notion of graded $S-$comultiplication modules to the notion of graded $S-$comultiplication $2-$absorbing modules in the next work.\\
\textbf{Acknowledgement}\\
The authors are grateful to the anonymous referee for his/her helpful comments and suggestions aimed at improving this paper.

\end{document}